\newcommand{\R}{\mathbb{R}}
\newcommand{\p}{\mathbb{P}}
\newcommand{\CP}{\mathbb{C}\mathrm{P}}
\newcommand{\ol}{\mathrm{Hol}}
\newcommand{\hilb}{\mathcal{H}}
\newcommand{\f}{\rightarrow}
\newcommand{\C}{\mathbb{C}}
\newcommand{\de}{\partial}
\newcommand{\Ric}{\mathrm{Ric}}
\newcommand{\K}{K\"{a}hler}
\newtheorem{theor}{Theorem}
\newtheorem{lem}[theor]{Lemma}
\newtheorem{cor}[theor]{Corollary}
\newtheorem{ex}{Example}
\newtheorem{remar}[theor]{Remark}
\newtheorem{Aprop}{Proposition}
\newtheorem{Alem}[Aprop]{Lemma}
\def\div {\mathop{\hbox{div}}}
\begin{document}
\title[Some remarks on LeBrun's Ricci flat metrics] {Some remarks on the  K\"{a}hler  geometry of LeBrun's Ricci  flat metrics  on $\C^2$}
\author[A. Loi, M. Zedda]{Andrea Loi, Michela Zedda}
\address{Dipartimento di Matematica e Informatica, Universit\`{a} di Cagliari,
Via Ospedale 72, 09124 Cagliari, Italy}
\email{loi@unica.it; michela.zedda@gmail.com }
\author[F. Zuddas]{Fabio Zuddas}
\address{Dipartimento di Matematica, Parco Area delle Scienze 53/A  Parma (Italy)}
\email{fabio.zuddas@unipr.it}

\thanks{
The first author was supported  by the M.I.U.R. Project \lq\lq Geometric
Properties of Real and Complex Manifolds'';
the second author was  supported by  RAS
through a grant financed with the ``Sardinia PO FSE 2007-2013'' funds and
provided according to the L.R. $7/2007$.}
\date{}
\subjclass[2000]{53C55; 58C25;  58F06}
\keywords{\K\  manifolds;  balanced metrics; Taub-NUT; quantization; TYZ
asymptotic expansion; Engli\v{s} expansion.}

\begin{abstract}
In this paper we investigate the balanced condition (in the sense of Donaldson) and the existence of an Engli\v{s} expansion for the LeBrun's metrics on $\C^2$. Our first result shows  that a LeBrun's metric on $\C^2$ is never balanced unless it is the flat metric. The second one shows that an Engli\v{s} expansion of the Rawnsley's function associated to a LeBrun's metric always exists, while the coefficient $a_3$ of the expansion vanishes if and only if the LeBrun's metric is indeed the flat one.
\end{abstract}

\maketitle

\section{Introduction}
 In \cite{LeBrun1} Claude LeBrun constructs, for all positive real numbers $m\geq 0$,
a  family of  \K\ metrics $g_m$  on ${\C}^2$,  whose associated \K\ form is given by
$\omega_m = \frac{i}{2} \partial \bar \partial \Phi_m$, where
\begin{equation}\label{Phim}
\Phi_m(u,v) = u^2 + v^2 + m (u^4 + v^4),
\end{equation}
 $u$ and $v$ are implicitly defined by
 \begin{equation}\label{x1z1}
 x_1= |z_1| =
e^{m(u^2-v^2)} u,\quad  x_2=|z_2| = e^{m(v^2-u^2)} v,
\end{equation}
and $(z_1, z_2)$ are the standard complex coordinates on $\C^2$.
 For $m = 0$ one
gets the flat metric $g_0$, i.e.  $\omega_0=\frac{i}{2}\left(dz_1\wedge d\bar z_1+dz_2\wedge d\bar z_2\right)$, while for $m>0$ each of the $g_m$'s
represents
 the first example of complete Ricci flat  and non-flat metric on ${\C}^2$ having the same
volume form of the flat metric $g_0$, namely
$\omega_m\wedge\omega_m=\omega_0\wedge\omega_0$.
 Moreover, for $m>0$,
$g_m$ is  isometric (up to dilation and rescaling)   to the
Taub-NUT  metric. In this paper  we refer to $g_m$, for a fixed $m\geq 0$, as the   {\em LeBrun  metric}.
The \K\ form $\omega_m$   has  been studied from the symplectic point of view by the first and third author of the present paper in
\cite{sympTN} (see also \cite{globsymp}). There we prove that for each $m$, $(\C^2, \omega_m)$
admits  global  Darboux coordinates. More precisely,  we construct an explicit  family $\Phi_m$ of symplectomorphisms  from $(\C^2, \omega_m)$ into $(\R^4, \omega_0)$ such that $\Phi_m^*\omega_0=\omega_m$
and $\Phi_0$ equals the identity of $\C^2\cong{\R}^4$.

In this paper we  study the  balanced condition, in the sense of Donaldson, and  Engli\v{s} expansion  for the LeBrun metric.
The main results are Theorem \ref{taubnut} and Theorem \ref{expansion} below. The first one states that the LeBrun metric on $\C^2$ is never balanced unless it is the flat metric, while the second one proves that the cefficient $a_3$ in Engli\v{s} expansion of Rawnsley's function $\epsilon_{\alpha g}$ vanishes if and only if $m=0$.

The paper is organized as follows. In the next section, after recalling what we need about balanced metrics, we state and prove Theorem \ref{taubnut}. Section \ref{englis} is devoted to Engli\v{s} expansion and to the proof of Theorem \ref{expansion}. Finally, in the Appendix we verify that the LeBrun's metrics are complete and Ricci flat (well-known facts added for completeness' sake) and we compute the norm of the curvature tensor and its Laplacian, which are needed in Section \ref{englis}.

\vskip 0.3cm

\noindent
The authors would like to thank Miroslav Engli\v{s}
for his very useful comments and remarks.

\section{On the balanced condition for the LeBrun metric}
Let $M$ be an $n$-dimensional complex manifold  endowed with a K\"ahler metric $g$
and let $\omega$ be the \K\ form associated to $g$, i.e. $\omega (\cdot ,\cdot  )=g(J\cdot, \cdot)$.
Assume that  the metric $g$ can be described by a
strictly plurisubharmonic real-valued function $\Phi :M\rightarrow\R$, called a {\em K\"ahler potential} for $g$,
i.e.
$\omega=\frac{i}{2}\de\bar\de \Phi$.

For a real number $\alpha >0$ consider   the weighted Hilbert space $\hilb_{\alpha \Phi}$ consisting of square integrable holomorphic functions on $(M, g)$, with weight $e^{-\alpha\Phi}$, namely
\begin{equation}\label{hilbertspace}
\hilb_{\alpha\Phi}=\left\{ f\in\ol(M) \ | \ \, \int_M e^{-\alpha\Phi}|f|^2\frac{\omega^n}{n!}<\infty\right\},
\end{equation}
where $\frac{\omega^n}{n!}=\det(\frac{\de^2\Phi}{\de z_\alpha \de \bar z_\beta})\frac{\omega_0^n}{n!}$ is the volume form associated to $\omega$ and $\omega_0=\frac{i}{2}\sum_{j=1}^{n} dz_j\wedge d\bar z_j$ is the standard K\"ahler form on $\C^n$.
If $\hilb_{\alpha \Phi}\neq \{0\}$ we can pick an orthonormal basis $\{f_j^{\alpha}\}$ and define its reproducing kernel
$$K_{\alpha \Phi}(x, y)=\sum_{j=0}^N f_j^{\alpha}(x)\overline{f_j^{\alpha}(y)},\qquad x,\, y\in M,$$
where $N+1$ ($N\leq\infty$) denotes the complex dimension of $\hilb_{\alpha \Phi}$.
Consider the function
\begin{equation}\label{epsilon}
\epsilon_{\alpha g}(x)=e^{-\alpha\Phi(x)}K_{\alpha \Phi}(x, x).
\end{equation}
As suggested by the notation it is not difficult to verify  that  this function  depends only on the metric $\alpha g$ and not on the choice of the K\"ahler potential $\Phi$
(which is defined up to the sum with the real part of a holomorphic function on $M$)
or on  the orthonormal basis chosen.
\vskip 0,3cm

\noindent {\bf Definition.} The metric $\alpha g$ is \emph{balanced} if  the function $\epsilon_{\alpha g}$ is a positive  constant.
\vskip 0,3cm

A balanced metric $g$ on $M$ can be viewed as  a particular  projectively induced  \K\ metric.
Recall that a  K\"{a}hler metric $g$ on a complex manifold $M$ is  {\em
projectively induced} if there exists a \K\   (i.e. a
holomorphic and isometric) immersion $F\!: M\rightarrow
{\C}P^N$, $N\leq \infty$, such that $F^*(g_{FS})=g$, where
$g_{FS}$ denotes the Fubini--Study metric on $\CP^N$.
Projectively induced K\"{a}hler metrics enjoy
important geometrical  properties and were extensively  studied in
\cite{ca} to whom we refer the reader for details).
In the case of a balanced metric, the
K\"ahler immersion
$$F_{\alpha}\!: M\f \CP^N,\ \ x\mapsto [f_0^{\alpha}(x), \dots ,f_N^{\alpha}(x)],\quad  \quad N\leq\infty,$$ is given by the  orthonormal basis $\{f_j^{\alpha}\}$ of the Hilbert space $\hilb_{\alpha \Phi}$.
Indeed the map $F_{\alpha}$ is well-defined since $\epsilon_{\alpha g}$ is a positive constant
and hence for all $x\in M$ there exists $\varphi\in \hilb_{\alpha \Phi}$ such that $\varphi(x)\neq 0$. Moreover, if $\omega_{FS}$ denotes the Fubini--Study \K\
form on $\CP^N$, then
\begin{equation}\label{balprojind}
\begin{split}
F_{\alpha}^*\omega_{FS}=&\frac{i}{2}\de\bar\de\log\sum_{j=0}^N|f_j(z)|^2
=\frac{i}{2}\de\bar\de\log K_{\alpha \Phi} (z, z)=\\
=&\frac{i}{2}\de\bar\de\log \epsilon_{\alpha g}+\frac{i}{2}\de\bar\de\log e^{\alpha\Phi}
=\frac{i}{2}\de\bar\de\log \epsilon_{\alpha g}+   \,\alpha\omega.
\end{split}
\end{equation}
Hence if $\alpha g$ is balanced then it is projectively induced via the holomorphic  map  $F_{\alpha}$.

In the literature the function $\epsilon_{\alpha g}$ was first introduced under the name of $\eta$-{\em function} by J. Rawnsley in \cite{rawnsley}, later renamed as $\theta$-{\em function} in \cite{CGR}.
 If $\epsilon_{\alpha g}$  is balanced for all sufficiently large
$\alpha$  then the  geometric quantization (namely the holomorphic line bundle such that $c_1(L)=[\omega]$) is called regular. Regular quantizations
plays a prominent role in the theory of quantization by deformation
of K\"{a}hler manifolds developed in \cite{CGR} (there the map  $F_{\alpha}$ is called  the  {\em coherent states map}).

\begin{remar}\rm
The definition of balanced metrics
 was originally given by S. Donaldson \cite{donaldson} (see also \cite{arezzoloi}, \cite{balancedC} and \cite{regcov}) in the case of  a compact polarized \K\ manifold $(M, g)$ and generalized in \cite{arezzoloi} (see also \cite{globsymp}, \cite{englisweigh}, \cite{grecoloi},  \cite{hartogsbalanced} \cite{cartanbalanced}) to the noncompact case. Here we give only the definition for those \K\ metrics which admit a globally defined potential such as the family of metrics $g_m$ on $\C^2$ in this paper.
 \end{remar}

We describe now a well-known example which is the prototype of our analysis.

\begin{ex}\label{c0}\rm
Let $(\C^n, g_0)$ be the complex Euclidean space endowed with the flat metric $g_0$, whose associated K\"ahler form is given by
$$\omega_0=\frac{i}{2}\de\bar\de||z||^2,$$
where $\Phi_0= ||z||^2=\sum_{j=1}^n|z_j|^2$. Let $\hilb_{\Phi_0}$ be the weighted Hilbert space of squared integrable holomorphic functions on $(\C^n,  g_{0})$, with weight  $e^{-||z||^2}$, namely
$$\hilb_{\Phi_0}=\left\{ \varphi\in\ol(\C^n)\ | \  \int_{\C^n}e^{-||z||^2}|\varphi|^2 \frac{\omega_{0}^n}{n!}<\infty \right\}.$$
Since the potential depends on $r_j = |z_j|^2$, $j = 1, \dots, n$, an orthogonal basis for $\hilb_{\Phi_0}$ is given by the monomials $z_1^{j_1}\cdots z_n^{j_n}$.
Furthermore, by
\begin{equation}
\begin{split}
&\int_\C |z_1|^{2j_1}\cdots |z_n|^{2j_n}e^{-  ||z||^2}\frac{i^n}{2^n}dz_1\wedge d\bar z_1\cdots dz_n\wedge d\bar z_n \\
&=\pi^n\int_0^\infty \cdots \int_0^\infty r_1^{j_1}\cdots r_n^{j_n} e^{-(r_1+\dots +r_n)}dr_1\cdots dr_n\\
&={\pi^n\, j_1!\cdots j_n!},
\end{split}
\end{equation}
an orthonormal  basis for  $\hilb_{\Phi_0}$ is given by $\left\{\frac{ z_1^{j_1}\cdots z_n^{j_n} }{\sqrt{\pi^n}\,j_1!\cdots j_n!}\right\}$. The reproducing kernel of
$\hilb_{\Phi_0}$  reads
$$K_{\Phi_0}(z, z)=\frac{ 1 }{\pi^n}\,e^{  ||z||^2},$$
hence $\epsilon_{ g_0}=\frac{ 1 }{\pi^n}$, and so
$g_0$ is  a balanced metric. Moreover, since for all $\alpha >0$,  the metric   $\alpha g_0$ is holomorphically isometric to $g_0$ it follows that
also $\alpha g_0$ is  a balanced metric.
\end{ex}

\begin{remar}\rm\label{remarmultiple}
If  $g$ is a balanced (resp. projectively induced) metric  on a complex manifold  $M$ and $\alpha$ is a positive constant  then there is not any reason for the metric
$\alpha g$
to be balanced (resp. projectively induced), as it happens in the previous example.  Consider for istance a Cartan domain endowed with its Bergman metric $g_B$. In accordance with the choice of $\alpha>0$, $\alpha g_B$ is either balanced or projectively induced but not balanced or also not projectively induced.  
In \cite{articwall} and
 \cite{cartanbalanced}
 the first and second author of this paper
study the balanced and projectively induced condition for multiples of the Bergman metric $g_B$ on each  Cartan domain in terms of its genus
$\gamma$.
\end{remar}

Here we study the balanced condition for the LeBrun metric. Consider the weighted Hilbert space
$\hilb_{\alpha\Phi_m}$ for the  metric $\alpha g_m$, $\alpha>0$
(where $\Phi_m$ is given by (\ref{Phim})), namely
$$\hilb_{\alpha\Phi_m}=\left\{f\in\ol(\C^2)\ | \ \int_{\C^2} e^{-\alpha\Phi_m} |f|^2 \frac{\omega_0^2}{2}\right\},$$
where we have taken into account that  $\omega_m\wedge\omega_m=\omega_0\wedge\omega_0=\omega_0^2$.
The following lemma is needed in the proof of our main results.
\begin{lem}\label{z1z2k}
The monomials  $\{z_1^j z_2^k\}$, $j, k=1, \dots $  are a complete orthogonal system for the Hilbert space
$\hilb_{\alpha\Phi_m}$.  Moreover Rawnsley's function is given by:

\begin{equation}\label{repkern}
\begin{split}
\epsilon_{\alpha g}&=e^{-\alpha\Phi_m}K_{ \alpha\Phi_m} =e^{-\alpha\Phi_m}\sum_{j,k=0}^{+ \infty} \frac{|z_1|^{2j}|z_2|^{2k}}
{\| z_1^j z_2^k \|_{m, \alpha}^2}\\
&=e^{-\alpha\Phi_m}
\sum_{j,k=0}^{+ \infty} \frac{e^{2m(j-k)(U-V)} U^j V^k}
{\| z_1^j z_2^k \|_{m, \alpha}^2}.
\end{split}
\end{equation}
where $U = u^2$, $V = v^2$,
\begin{equation}\label{norma}
\begin{split}
\| z_1^j z_2^k \|^2_{m, \alpha}=&\int_{\C^2}e^{-\alpha \left(u^2+v^2+m(u^4+v^4)\right)}|z_1|^{2j} |z_2|^{2k}\frac{\omega_0^2}{2}\\
=&\, \pi^2\left[\, I_{m, \alpha}(j,j,k) I_{m, \alpha}(k,k,j) + 2m I_{m, \alpha}(j+1,j,k ) I(k,k,j) +\right. \\
&\left.+2m  I_{m, \alpha}(j,j,k ) I_{m, \alpha}(k+1,k,j)\right],
\end{split}
\end{equation}
and
\begin{equation}\label{ijk}
I_{m, \alpha}(i,j,k) = \int_{0}^{\infty} e^{- \alpha [U + m U^2]+ 2mU(j -k)}U^i dU.
\end{equation}
\end{lem}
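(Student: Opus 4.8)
The approach is to exploit the invariance of the problem under the torus action $(z_1,z_2)\mapsto(e^{i\theta_1}z_1,e^{i\theta_2}z_2)$ and then to reduce everything to one-dimensional integrals by the change of variables from $(z_1,z_2)$ to $(u,v)$ and then to $(U,V)=(u^2,v^2)$. \emph{Step 1 (orthogonality and square-integrability).} By (\ref{x1z1}) the potential $\Phi_m$ is a function of $|z_1|$ and $|z_2|$ only, so both $e^{-\alpha\Phi_m}$ and $\frac{\omega_0^2}{2}$ are invariant under the above torus action. Writing $z_l=\rho_le^{i\theta_l}$ and integrating in $\theta_1,\theta_2$ first, all cross terms vanish and one obtains $\langle z_1^jz_2^k,\,z_1^{j'}z_2^{k'}\rangle_{m,\alpha}=0$ whenever $(j,k)\neq(j',k')$. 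That each $z_1^jz_2^k$ actually lies in $\hilb_{\alpha\Phi_m}$, and in particular that the constant function $1$ does, will follow from the explicit norm formula of Step 3, since the one-dimensional integrals (\ref{ijk}) converge thanks to the Gaussian factor $e^{-\alpha mU^2}$.

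\emph{Step 2 (completeness).} Let $f\in\hilb_{\alpha\Phi_m}$ be orthogonal to every monomial $z_1^jz_2^k$. Being entire, $f$ has a Taylor expansion $f=\sum_{l,s\ge0}c_{ls}\,z_1^lz_2^s$ converging absolutely and uniformly on compact sets. The integrand $f\,\overline{z_1^jz_2^k}\,e^{-\alpha\Phi_m}$ lies in $L^1(\C^2,\frac{\omega_0^2}{2})$ by Cauchy--Schwarz (since $f$ and $z_1^jz_2^k$ are both in $\hilb_{\alpha\Phi_m}$), so by Fubini one may integrate in $\theta_1,\theta_2$ first; expanding $f$ term by term on each torus $\{|z_1|=\rho_1,\,|z_2|=\rho_2\}$, all terms except the one with $(l,s)=(j,k)$ are annihilated by the angular integration, and one is left with $0=\langle f,z_1^jz_2^k\rangle_{m,\alpha}=c_{jk}\,\|z_1^jz_2^k\|_{m,\alpha}^2$ for every $j,k$, hence $f\equiv0$. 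I expect this completeness statement to be the most delicate point, because $\Phi_m$ is only implicitly defined through (\ref{x1z1}) and its growth in $z$ is irregular --- it is of order $(\log|z_1|)^2$ along the locus where $|z_2|\to0$ --- so one cannot simply appeal to a Carleman-type density criterion for polynomials and has to run the argument above by hand.

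\emph{Step 3 (the reproducing kernel and the norms).} Granting Steps 1--2, $\{z_1^jz_2^k/\|z_1^jz_2^k\|_{m,\alpha}\}$ is an orthonormal basis of $\hilb_{\alpha\Phi_m}$, so $K_{\alpha\Phi_m}(z,z)=\sum_{j,k\ge0}|z_1|^{2j}|z_2|^{2k}\big/\|z_1^jz_2^k\|_{m,\alpha}^2$; multiplying by $e^{-\alpha\Phi_m}$ and inserting $|z_1|^{2j}|z_2|^{2k}=e^{2m(j-k)(U-V)}U^jV^k$, read off from (\ref{x1z1}), gives (\ref{repkern}). For (\ref{norma}) I would express $\frac{\omega_0^2}{2}$ in polar coordinates as $\rho_1\rho_2\,d\rho_1\,d\rho_2\,d\theta_1\,d\theta_2$, carry out the $\theta$-integration, and then change variables $(|z_1|,|z_2|)\mapsto(u,v)\mapsto(U,V)$. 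The computational crux is the Jacobian of $(|z_1|,|z_2|)\mapsto(u,v)$: a direct $2\times2$ determinant in which the exponential prefactors multiply to $1$ and the two $4m^2u^2v^2$ terms cancel, yielding $(1+2mu^2)(1+2mv^2)-4m^2u^2v^2=1+2m(U+V)$. Hence $\|z_1^jz_2^k\|_{m,\alpha}^2=\pi^2\int_0^\infty\!\int_0^\infty e^{-\alpha(U+V+m(U^2+V^2))+2m(j-k)(U-V)}U^jV^k\big(1+2m(U+V)\big)\,dU\,dV$, and writing $1+2m(U+V)=1+2mU+2mV$ splits this into three double integrals, each of which factors as a product of two one-dimensional integrals of the form (\ref{ijk}); collecting the factors gives precisely (\ref{norma}) (with $I_{m,\alpha}$ in place of the misprinted $I$ in the middle summand).
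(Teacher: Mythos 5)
Your proposal is correct and follows essentially the same route as the paper: orthogonality and completeness of the monomials from the rotational invariance of $\Phi_m$ (the paper simply cites Engli\v{s}'s Lemma 3.11 where you spell out the Taylor-expansion argument), followed by the passage to polar coordinates and the change of variables to $(U,V)$ with Jacobian $1+2m(U+V)$, whose splitting as $1+2mU+2mV$ yields the three factored one-dimensional integrals $I_{m,\alpha}$. The only cosmetic difference is that the paper changes variables directly from $(r_1,r_2)=(|z_1|^2,|z_2|^2)$ to $(U,V)$ rather than factoring through $(u,v)$; the resulting determinant is the same.
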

\begin{proof}
Since the metric depends only on the squared module of the variables it is easy to see that the monomials $\{z_1^j z_2^k\}$, $j, k=1, \dots $ are a complete orthogonal system for $\hilb_{\alpha\Phi_m}$  (cfr. \cite[Lemma 3.11]{englquant}).
By passing to polar coordinates we have
$$\| z_1^j z_2^k \|^2_{m, \alpha} = \pi^2 \int_{0}^{\infty} \int_{0}^{\infty} e^{- \alpha [u^2 + v^2 + m( u^4 +
v^4)]} r_1^{j} r_2^{k} dr_1 dr_2$$
where $u = u(r_1, r_2), v = v(r_1, r_2)$. Set  $u^2 = U$, $v^2 = V$ and consider the map
$$G:
{\R}^2 \rightarrow {\R}^2, \ (U, V) \mapsto  (r_1=e^{2m(U-V)} U,\
r_2=e^{2m(V-U)} V)$$ and its Jacobian matrix
\begin{displaymath}
J_G = \left( \begin{array}{cc}
(1 + 2mU) \; e^{2m(U-V)} &  - 2 m U \; e^{2m(U-V)} \\
- 2 m V \; e^{2m(V-U)} &  (1 + 2m V) \; e^{2m(V-U)}
\end{array} \right) .
\end{displaymath}
Since $\det(J_G) = 1 + 2 m (U + V) \neq 0$, by a change of coordinates, the above integral becomes
\begin{equation}
\begin{split}
&\| z_1^j z_2^k \|^2_{m, \alpha} =\\
=&\,\pi^2\!\! \int_{0}^{\infty}\!\!\!\! \int_{0}^{\infty}\!\!\!\!\! e^{- \alpha [U + V + m( U^2 +V^2)]} e^{2m j(U-V)} e^{2m k (V-U)} U^j V^k [1 + 2 m (U\! +\! V)] dUdV\\
=&\,\pi^2\!\! \int_{0}^{\infty} \int_{0}^{\infty} e^{- \alpha [U + V + m( U^2 +
V^2)]} e^{2m j(U-V)} e^{2m k (V-U)} U^j V^k dU dV +\\
&+ 2 m \pi^2 \int_{0}^{\infty} \int_{0}^{\infty} e^{- \alpha [U + V
+ m( U^2 + V^2)]} e^{2m j(U-V)} e^{2m k (V-U)} U^{j+1} V^k dU dV
+\\
&+2 m \pi^2 \int_{0}^{\infty} \int_{0}^{\infty} e^{- \alpha [U + V
+ m( U^2 + V^2)]} e^{2m j(U-V)} e^{2m k (V-U)} U^{j} V^{k+1}dU dV\nonumber
\end{split}
\end{equation}
\begin{equation}
\begin{split}
=&\,\pi^2\!\! \int_{0}^{\infty} e^{- \alpha [U + m U^2]+ 2mU(j -k)} U^j dU \int_{0}^{\infty} e^{-
\alpha [V + m V^2] + 2mV (k-j)} V^k dV +\\
&+ 2m \pi^2\!\! \int_{0}^{\infty} \!\!\!\!e^{- \alpha [U + m U^2]+ 2mU(j -k)} U^{j+1} dU \int_{0}^{\infty} e^{-
\alpha [V + m V^2] + 2mV (k-j)} V^k dV +\\
&+ 2m \pi^2\!\! \int_{0}^{\infty} e^{- \alpha [U + m U^2]+ 2mU(j -k)} U^j dU \int_{0}^{\infty} e^{-
\alpha [V + m V^2] + 2mV (k-j)} V^{k+1} dV.\nonumber
\end{split}
\end{equation}
Formula (\ref{norma}) and hence (\ref{repkern}) follows by  comparing last equality with (\ref{ijk}).
\end{proof}

We are now in the position to state    the first  main result of this paper.

\begin{theor}\label{taubnut}
Let $m\geq 0$,   $g_m$ be the  LeBrun metric on $\C^2$ and $\alpha$ be a positive real number. Then the  metric $\alpha g_m$  is balanced if and only if $m=0$, i.e. it is the flat metric on $\C^2$.
\end{theor}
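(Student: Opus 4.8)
The ``if'' direction is immediate: for $m=0$ the metric $\alpha g_0$ is holomorphically isometric to the flat metric, which is balanced (Example \ref{c0}). So the content is the ``only if'' direction, and I would argue it by contradiction. Assume $\alpha g_m$ is balanced, i.e.\ $\epsilon_{\alpha g}\equiv C$ for a constant $C>0$. Putting $r_j=|z_j|^2$ and $a_{jk}:=1/\|z_1^jz_2^k\|^2_{m,\alpha}>0$, Lemma \ref{z1z2k} shows that this is equivalent to the identity of real-analytic functions
\[
\sum_{j,k\ge 0}a_{jk}\,r_1^{\,j}\,r_2^{\,k}\;=\;C\,e^{\alpha\Phi_m}\qquad\text{on }[0,\infty)^2 ,
\]
where $\Phi_m$ is read, through (\ref{x1z1}), as a function of $(r_1,r_2)$; inverting (\ref{x1z1}) to the needed order one gets $\Phi_m=(r_1+r_2)+m\,(4r_1r_2-r_1^2-r_2^2)+(\text{terms of order}\ge 3)$. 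Hence every Taylor coefficient of $e^{\alpha\Phi_m}$ in $(r_1,r_2)$ has to have the special form $a_{jk}/C$, and each $a_{jk}$ is, by (\ref{norma})--(\ref{ijk}), an explicit ratio of Gaussian integrals of the type (\ref{ijk}), which can be written in closed form by means of the complementary error function $\Erfc$.

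The plan is to turn this into a finite, overdetermined system. Comparing Taylor coefficients of low order, and using $a_{00}=C$, one finds, for instance,
\[
\frac{a_{10}}{a_{00}}=\alpha,\qquad \frac{a_{20}}{a_{00}}=\frac{\alpha^{2}}{2}-\alpha m,\qquad \frac{a_{11}}{a_{00}}=\alpha^{2}+4\alpha m ,
\]
together with analogous relations coming from higher coefficients. Eliminating $\alpha$ and $C$ among the three displayed relations leaves a single identity involving only the norms, namely $a_{00}\,(a_{11}+4a_{20})=3\,a_{10}^{2}$; substituting the explicit $\Erfc$-expressions for these (products of) norms, one checks that this identity --- or one of its higher-order analogues --- cannot hold for $m>0$. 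Since all of them do hold when $m=0$, this forces $m=0$, which is the claim.

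The main obstacle, and the reason this is not a one-line coefficient comparison, is that the first-order term in $m$ of each of these necessary relations vanishes: to first order in $m$ the LeBrun metric already ``looks balanced''. This is not an accident --- it reflects the fact that $g_m$ is Ricci flat, so the leading obstruction to the balanced condition, the one carried by the scalar curvature, is automatically absent. One must therefore expand the $\Erfc$-integrals, equivalently $\Phi_m$ and $\epsilon_{\alpha g}$, to the order at which the genuine curvature of $g_m$ --- nonzero precisely when $m>0$ --- becomes visible, and verify that the relevant necessary relation really fails there; this is where curvature information on $g_m$, of the kind computed in the Appendix, enters the proof. A soft partial observation worth recording along the way: by (\ref{balprojind}) a balanced $\alpha g_m$ is in particular projectively induced, so Calabi's criterion forces every Taylor coefficient of $e^{\alpha\Phi_m}$ in $(r_1,r_2)$ to be nonnegative; since that of $r_1^{2}$ equals $\tfrac{\alpha^{2}}{2}-\alpha m$, the balanced condition is already excluded as soon as $2m>\alpha$, so the real work is confined to the range $0<2m\le\alpha$.
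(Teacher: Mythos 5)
Your overall strategy coincides with the paper's: reduce to the complete orthogonal system of monomials (Lemma \ref{z1z2k}), note that balancedness forces $a_{jk}=1/\| z_1^jz_2^k\|^2_{m,\alpha}$ to be proportional to the Taylor coefficient of $r_1^jr_2^k$ in $e^{\alpha\Phi_m}$, and dispose of the range $2m>\alpha$ via Calabi's criterion (this is exactly the paper's Lemma \ref{pind}). Your preparatory computations are sound; in particular the cross coefficient $\alpha^2+4\alpha m$ of $r_1r_2$ is the correct one (it can be confirmed from $\Phi_{12}|_0=4m$ in the Appendix, or by restricting to the diagonal $r_1=r_2$, where $e^{\alpha\Phi_m}=e^{\alpha(2t+2mt^2)}$), and your observation that \emph{every} such necessary relation is satisfied to first order in $m$ is also correct and identifies the genuine difficulty of the problem.

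The gap is precisely at the decisive step. Having derived the necessary identity $a_{00}(a_{11}+4a_{20})=3a_{10}^2$, you assert that ``one checks that this identity --- or one of its higher-order analogues --- cannot hold for $m>0$'', but no such check is carried out, and the hedge concedes that you have not determined which identity fails. Since, as you yourself point out, all of these relations hold to first order in $m$, their failure for $m>0$ is delicate and cannot be waved through; moreover, collapsing three relations into a single $\alpha$-free one only increases the risk of accidental cancellation, whereas checking a single relation such as $a_{11}=(\alpha^2+4\alpha m)\,a_{00}$ directly is both simpler and stronger. The paper closes exactly this step by an \emph{exact}, non-perturbative computation: integration by parts in (\ref{ijk}) expresses $I_{m,\alpha}(1,0,0)=I_{m,\alpha}(1,1,1)$ and $I_{m,\alpha}(2,1,1)$ as affine functions of $y=I_{m,\alpha}(0,0,0)$, the necessary identity then becomes a quadratic equation in $y$ whose discriminant equals $16m^3(m-\alpha)<0$ for $0<m<\alpha$, so the identity admits no real solution at all. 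Some argument of this kind --- an exact evaluation of the $\Erfc$-type integrals, or a second-order expansion in $m$ with an explicitly nonvanishing coefficient --- is indispensable here, and without it your proposal remains a plausible plan rather than a proof.
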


In order to prove it we need  the following lemma, interesting on its own sake.

\begin{lem}\label{pind}
Let $m\geq 0$,   $g_m$ be the  LeBrun metric on $\C^2$ and $\alpha$ be a positive real number. Then  $\alpha g_m$   is not projectively induced for   $m>\frac{\alpha}{2}$.
\end{lem}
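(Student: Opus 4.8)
The plan is to obstruct a \K\ immersion of $(\C^2,\alpha g_m)$ into $\CP^N$ by a dimension/growth argument on the space of holomorphic monomials, using Calabi's rigidity and the diastasis function. Recall from \cite{ca} that a \K\ metric is projectively induced into $\CP^N$ if and only if the metric $K\alpha g_m$ is projectively induced for all integers $K\geq 1$ in a compatible way, and more usefully that $\alpha g_m$ is projectively induced precisely when the function $e^{\alpha\Phi_m}$ (or rather $e^{\alpha D_p}$, with $D_p$ the diastasis centered at a point $p$) is, up to normalization, a sum of squares of moduli of globally defined holomorphic functions on $\C^2$. Since $\Phi_m$ is a global potential and the diastasis at the origin equals $\Phi_m$ (because $\Phi_m$ vanishes to second order at $0$ and its mixed derivatives reproduce the metric), projective inducedness of $\alpha g_m$ forces
\begin{equation}\label{sos}
e^{\alpha\Phi_m(u,v)}=\sum_{j=0}^{N}|h_j(z_1,z_2)|^2
\end{equation}
for some holomorphic $h_j$ on $\C^2$, with convergence on all of $\C^2$.

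The key step is then to extract a contradiction from \eqref{sos} when $m>\alpha/2$ by comparing growth rates. First I would restrict to the slice $z_2=0$, i.e. $v=0$, so that $V=0$ and \eqref{x1z1} gives $x_1=|z_1|=e^{mu^2}u=e^{mU}\sqrt U$, while $\Phi_m$ restricts to $U+mU^2$. Writing $t=|z_1|^2=Ue^{2mU}$, the left side of \eqref{sos} becomes $e^{\alpha(U+mU^2)}$ expressed as a function of $t$, whereas the right side is a genuine power series $\sum_{j,k}c_{jk}|z_1|^{2j}|z_2|^{2k}$ restricted to $z_2=0$, hence $\sum_j c_j t^j$ with $c_j\geq 0$. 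So one needs: $e^{\alpha(U+mU^2)}$, viewed as a function of $t=Ue^{2mU}$, must be a convergent power series in $t$ with nonnegative coefficients on $[0,\infty)$. As $U\to\infty$ one has $t\sim Ue^{2mU}$, so $U\sim\frac{1}{2m}\log t$ and $U^2\sim\frac{1}{4m^2}(\log t)^2$; thus $e^{\alpha(U+mU^2)}\sim \exp\!\big(\frac{\alpha}{4m}(\log t)^2\big)$ up to lower order, which grows faster than any power of $t$ — this is fine for an entire function, so mere growth is not enough and one must be more careful.

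The sharper obstruction, which is the real heart of the argument, comes from looking near $t=0$ rather than at infinity, or equivalently from the radius of convergence being forced by analyticity of the substitution. The correct tack: the monomials $z_1^jz_2^k$ and the potential's dependence on $U,V$ mean that \eqref{sos} is equivalent, after the Calabi-type expansion around the origin, to the infinite Hermitian matrix encoding the Taylor coefficients of $e^{\alpha\Phi_m}$ in the variables $(z_1\bar z_1,z_2\bar z_2)$ being positive semidefinite. Since everything is diagonal in the monomial basis, this reduces to: the coefficients $b_{jk}$ in $e^{\alpha\Phi_m}=\sum b_{jk}|z_1|^{2j}|z_2|^{2k}$ are all $\geq 0$. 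Computing $b_{jk}$ requires inverting \eqref{x1z1} to write $U,V$ as power series in $|z_1|^2,|z_2|^2$ and substituting into $e^{\alpha(U+V+m(U^2+V^2))}$; by Lagrange inversion $U=|z_1|^2-2m(\ldots)+\cdots$ and the sign pattern of these coefficients is governed by $m$. The plan is to compute the first few $b_{jk}$ explicitly — in particular a coefficient such as $b_{20}$ or the coefficient of $|z_1|^4$ — and show it equals a polynomial in $\alpha,m$ that becomes negative exactly when $m>\alpha/2$, contradicting \eqref{sos}.

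I expect the main obstacle to be the bookkeeping in the Lagrange inversion: one must carry the expansion of $U(|z_1|^2,|z_2|^2)$ to high enough order (at least quadratic) and then expand the exponential, keeping track of which terms are negative, to isolate a single coefficient whose sign flips precisely at $m=\alpha/2$. A clean way to do this is to work on the slice $z_2=0$ with $t=Ue^{2mU}$, invert to get $U=t-2mt^2+\cdots$ (so $U=\sum_{n\geq1}\frac{(-2mn)^{n-1}}{n!}(2m)^{?}\cdots$ — the exact Lagrange coefficients $U=\sum_{n\geq 1}\frac{(-2m)^{n-1}n^{n-1}}{n!}t^n$), substitute into $f(U)=e^{\alpha U+\alpha mU^2}=1+\alpha U+(\alpha m+\tfrac{\alpha^2}{2})U^2+\cdots$, and read off the coefficient of $t^2$: it is $(\alpha m+\tfrac{\alpha^2}{2})-2m\alpha=\tfrac{\alpha^2}{2}-\alpha m=\tfrac{\alpha}{2}(\alpha-2m)$, which is negative iff $m>\alpha/2$. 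Since this coefficient must be $\geq 0$ by \eqref{sos}, we conclude $\alpha g_m$ is not projectively induced for $m>\alpha/2$, as claimed.
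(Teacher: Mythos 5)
Your proposal is correct and follows essentially the same route as the paper: restrict to the totally geodesic slice $z_2=0$, note that the rotation-invariant potential $\tilde\Phi_m=U+mU^2$ is the Calabi diastasis at the origin, and apply Calabi's criterion to the expansion $e^{\alpha\tilde\Phi_m}-1=\alpha|z_1|^2+\tfrac{\alpha}{2}(\alpha-2m)|z_1|^4+\cdots$, whose $|z_1|^4$-coefficient is negative exactly when $m>\alpha/2$. The growth-at-infinity digression in your middle paragraph is an unnecessary detour (as you yourself note), but the final Lagrange-inversion computation of the coefficient $\tfrac{\alpha}{2}(\alpha-2m)$ is exactly the paper's argument.
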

\begin{proof}
Assume by contradiction that $\alpha g_m$ is projectively induced, namely that  there exists $N\leq\infty$ and   a \K\  immersion
of $(\C^2, \alpha g_m)$ into $\C P^N$. Then, it does exist also a \K\ immersion into $\CP^N$
of  the   \K\ submanifold of $(\C^2, \omega_m)$  defined by $z_2 = 0$, $z_1 = z$, endowed with the induced metric, having potential $\tilde \Phi_m = u^2 + m u^4$, where $u$ is defined implicitly by $z\bar{z} = e^{2 m u^2} u^2$. Observe that $\tilde \Phi_m$ is the Calabi's diastasis function for this  metric , since it is a rotation invariant potential centered at the origin (see \cite{ca} or also \cite[Th. 3, p. 3]{articwall}).

Consider the power expansion around the origin of the function $e^{\alpha\tilde \Phi_m}-1$, that, by (\ref{Phim}) and (\ref{x1z1}), reads
\begin{equation}
e^{\alpha\tilde\Phi_m}-1=\alpha |z|^2+\frac{\alpha}{2}(\alpha-2 m)|z|^4+\dots.\nonumber
\end{equation}
Since $\alpha-2 m\geq0$ if and only if $m\leq\frac{\alpha}{2}$, it follows by Calabi's criterion (see \cite[Th. 6, p. 4]{articwall}) that $\alpha g_m$ can not admit a K\"ahler immersion into  $\CP^N$ for any $m>\frac{\alpha}{2}$.
\end{proof}

\begin{remar}\rm
We conjecture that $\alpha g_m$ is not projectively induced also for $0<m\leq \frac{\alpha}{2}$ (for $m =0$ it is projectively induced and also balanced by Example \ref{c0}).
We have computer evidences of this fact but we still do not have a proof. Nevertheless we do not need  this fact in the following construction. It is also worth pointing out that using Calabi's techniques \cite{ca} it is a simple matter to verify that $(\C^2, \alpha g_m)$ cannot be \K\ immersed into the complex hyperbolic space $\C H^N$, $N\leq \infty$, equipped with the hyperbolic metric and, moreover, if $(\C^2,  g_m)$ admits a \K\ immersion into
$(\C^N, g_0)$, $N\leq\infty$, then $m=0$.
\end{remar}

\noindent
\begin{proof}
[Proof of Theorem \ref{taubnut}.]
If $m=0$, $\alpha\,g_0$ is the flat metric on $\C^2$ which is balanced for all positive values of $\alpha$ (cfr. Example \ref{c0}). Furthermore, for $m>\frac{\alpha}{2}$, by Lemma \ref{pind}, $\alpha g_m$ is not projectively induced and hence it cannot be balanced. Thus the proof of the theorem will be achieved if we show that $\alpha g_m$ is not balanced for  $0<m\leq \frac{\alpha}{2}$. Notice that if  $\alpha g_m$ is balanced
then there exists a basis $\{f_j^{\alpha} \}$ for ${\hilb}_{\alpha \Phi_m}$ such that
\begin{equation}\label{balancedTN}
\int_{\C^2}e^{-\alpha \left(u^2+v^2+m(u^4+v^4)\right)}f_j^{\alpha}\bar f_k^{\alpha}\frac{\omega_0^2}{2}=\lambda\delta_{jk},
\end{equation}
for some $\lambda$ not depending on $j$, $k$.
Notice also that
the power expansion around the origin of the function $e^{\alpha\Phi_m}$ reads
\begin{equation}
e^{\alpha\Phi_m}=1+\alpha(|z_1|^2+|z_2|^2)+ \alpha\,\left(\frac{\alpha}{2}-\,m\right)\left(|z_1|^4+|z_2|^4\right)+ \alpha ^2\,|z_1|^2\,|z_2|^2+\dots.\nonumber
\end{equation}
as it follows  by (\ref{Phim}) and (\ref{x1z1}). Therefore,
since the metric $g_m$ depends on the module of the variables, we can assume without loss of generality that $f_0^{\alpha}=1$ and $f_1^{\alpha}=\alpha\,z_1z_2$.
We are going to show that  $\|f_0^{\alpha}\|^2_{m, \alpha}= \|f_1^{\alpha}\|^2_{m, \alpha}$ does not hold for $0<m\leq\frac{\alpha}{2}$  and hence,
for these values of $m$, $\alpha g_m$ is not balanced.
By (\ref{norma}) we have
\begin{equation}
\|f_0^{\alpha}\|^2_{m, \alpha}=\int_{\C^2}e^{-\alpha \Phi_m}\frac{\omega_0^2}{2}=\pi^2\left[\, I_{m, \alpha}(0,0,0)^2 + 4m I_{m, \alpha}(1,0,0 ) I_{m, \alpha}(0,0,0) \right],
\end{equation}
\begin{equation}
\begin{split}
\|f_1^{\alpha}\|^2_{m, \alpha}&=
\alpha^2\int_{\C^2}e^{-\alpha \Phi_m}|z_1|^2\,|z_2|^2\frac{\omega_0^2}{2}\\
&=\alpha^2\pi^2\left[\, I_{m, \alpha}(1,1,1)^2 + 4m I_{m, \alpha}(2,1,1 ) I_{m, \alpha}(1,1,1) \right].
\end{split}
\end{equation}
For all $\alpha>0$ define $h_{\alpha}\!:\R^+\cup\{0\}\f \R$ as
\begin{equation}
\begin{split}
h_{\alpha}(m)=&\,I_{m, \alpha}(0,0,0)^2 + 4m I_{m, \alpha}(1,0,0 ) I_{m, \alpha}(0,0,0)+\\
&-\alpha^2I_{m, \alpha}(1,1,1)^2 - 4m \alpha^2 I_{m, \alpha}(2,1,1 ) I_{m, \alpha}(1,1,1).\nonumber
\end{split}
\end{equation}

We need to prove that $h_{\alpha}(m)\neq 0$ for  $0<m\leq\frac{\alpha}{2}$.
Observe  that $h_{\alpha}(0)=0$, in accordance with the fact that the flat metric $g_0$ on $\C^2$ is balanced for all $\alpha >0$. We have:
\begin{equation}
I_{m, \alpha}(0,0,0)= \int_{0}^{\infty} e^{- \alpha (U + m U^2)}dU,
\end{equation}
\begin{equation}
\begin{split}
I_{m, \alpha}(1,0,0)= I_{m, \alpha}(1,1,1)=&\int_{0}^{\infty} e^{- \alpha (U + m U^2)}UdU\\
=&-\frac{1}{2 m}I_{m, \alpha}(0,0,0)+{\frac {1}{2\alpha m}},
\end{split}
\end{equation}
\begin{equation}
\begin{split}
I_{m, \alpha}(2,1,1)=&\int_{0}^{\infty} e^{- \alpha (U + m U^2)}U^2dU\\
=&\frac{2m+\alpha}{4m^2 \alpha}I_{m, \alpha}(0,0,0)-\frac {1}{4m^2\alpha}.
\end{split}
\end{equation}
Hence
\begin{equation}
\begin{split}
h_{\alpha}(m)=&\frac{1}{4m^2\alpha}(-4 m^2\alpha \,I_{m, \alpha}(0,0,0)^2 + 8\,m^2I_{m, \alpha}(0,0,0) + \alpha^3 I_{m, \alpha}(0,0,0)^2+  \\
&+4 \alpha^2 m \,I_{m, \alpha}(0,0,0)^2  + \,\alpha - 4 m\alpha \, I_{m, \alpha}(0,0,0) - 2\,\alpha^2 I_{m, \alpha}(0,0,0)).\nonumber
\end{split}
\end{equation}
It follows that $h_{\alpha}(m)=0$ if and only if $I_{m, \alpha}(0,0,0)$ satisfies the following second order equation
\begin{equation}
y^2(-4m^2 \alpha + \alpha^3 +4m \alpha^2)+2y(4 m^2 - 2 m \alpha - \alpha^2)+ \alpha =0.\nonumber
\end{equation}
Its discriminant
\begin{equation}
\Delta= (4 m^2 - 2 m \alpha -  \alpha^2)^2 - \alpha^2 (-4m^2 + \alpha^2 +4m \alpha) = 16m^3(m - \alpha)\nonumber
\end{equation}
is negative for all  $0<m<\alpha$, so that $h_{\alpha}(m)\neq 0$ for all $0 < m < \alpha$ and, a fortiori,  for $0<m\leq\frac{\alpha}{2}$,
which is exactly what we wanted to prove.
\end{proof}

\section{Engli\v{s} expansion for the LeBrun metric}\label{englis}
Let $(M, g, \omega =\frac{i}{2}\partial\bar\partial\Phi)$ be a \K\ manifold as in the previous
section (i.e. admitting a globally defined  \K\ potential) and consider Rawnsley's function (\ref{epsilon}) for a fixed $\alpha>0$.
Even if this function is not constant (i.e. $\alpha g$ is not balanced) it is interesting to understand
when   $\epsilon_{\alpha g}$ admits an asymptotic expansion for $\alpha\rightarrow+\infty$.
For example this  turns out to be true
when $M$ is  a strongly pseudoconvex  bounded domain in
${\C}^n$ with real analytic boundary or when $M$ is an Hermitian symmetric space of noncompact type equipped with its Bergman metric (cfr. Remark \ref{remarmultiple} above).

Indeed M. Engli\v{s} \cite{englis2}
shows that for these domains
$\epsilon_{\alpha g}$ admits the following  asymptotic expansion (which has been  christened  as {\em Engli\v{s} expansion} in \cite{engexp})
with respect to $\alpha$
\begin{equation}\label{Talpha}
\epsilon_{\alpha}(x) \sim
\sum_{j=0}^{+\infty} a_j(x){\alpha}^{n-j}
\end{equation}
where $a_0=1$ and $a_j$ for $j=0,1, 2,\ldots$, are smooth coefficients, in the sense that, for every integers $l, r$ and every compact $H \subseteq M$,
\begin{equation}\label{TalphaMEANS}
\| \epsilon_{\alpha}(x) -
\sum_{j=0}^l  a_j(x){\alpha}^{n-j} \|_{C^r} \leq \frac{C(l, r, H)}{\alpha^{l+1}}
\end{equation}
for some constant $C(l, r, H) >0$.
 Moreover, in
\cite{me2}  M. Engli\v{s} also   computes  the coefficients $a_j,\ j\leq
3$, namely:
\begin{equation}\label{coefflu}
\left\{\begin{array}
{l}
a_0=1\\
a_1(x)=\frac{1}{2}\rho\\
a_2(x)=\frac{1}{3}\Delta\rho
+\frac{1}{24}(|R|^2-4|\Ric |^2|+3\rho ^2)\\
a_3(x)=\frac{1}{8}\Delta\Delta\rho +
\frac{1}{24}\div\div (R, \Ric)-
\frac{1}{6}\div\div (\rho\Ric)+\\
\qquad\quad\ \ +\frac{1}{48}\Delta (|R|^2-4|\Ric |^2|+8\rho ^2)+
\frac{1}{48}\rho(\rho ^2- 4|\Ric |^2+ |R|^2)+\\
\qquad\quad\ \ +\frac{1}{24}(\sigma_3 (\Ric)- \Ric (R, R)-R(\Ric ,\Ric)),
\end{array}\right.
\end{equation}
where
\noindent (see also \cite{loismooth} and \cite{lutian})
$\rho$, $R$, $Ric$ denote respectively the scalar curvature,
the curvature tensor and the Ricci tensor of $(M, g)$,
and we are using  the following notations (in local coordinates $z_1, \dots , z_n$):
\begin{equation}\begin{array}{l}\label{values}
|D^{'}\rho|^2=\sum_{i=1}^n|\frac{\partial \rho}{\partial z_i}|^2,\\
|D^{'}\Ric|^2=\sum_{i, j, k=1}^n|\Ric_{i\bar j,k}|^2,\\
|D^{'}R|^2=\sum_{i, j, k, l, p=1}^n|R_{i\bar jk\bar l,p}|^2,\\
\div\div (\rho \Ric)=2|D^{'}\rho|^2+
\sum_{i,j=1}^n\Ric_{i\bar j}\frac{\partial^2 \rho}{\partial\bar z_j\partial z_i}
+\rho\Delta\rho,\\
\div\div (R, \Ric)=
-\sum_{i,j=1}^n\Ric_{i\bar j}\frac{\partial^2 \rho}{\partial\bar z_j\partial z_i}
-2|D^{'}\Ric|^2+\\
\qquad\qquad\qquad\quad\ \ +\sum_{i, j, k, l=1}^nR_{j\bar il\bar k}R_{i\bar j,k\bar l}-
R(\Ric, \Ric)-\sigma_3(\Ric),\\
R(\Ric, \Ric)=\sum_{i, j, k, l=1}^nR_{i\bar jk\bar l}\Ric_{j\bar i}\Ric_{l\bar k},\\
\Ric(R, R)=\sum_{i, j, k, l,p,q=1}^n\Ric_{i\bar j}R_{j\bar kp\bar q}R_{k\bar iq\bar p},\\
\sigma_3 (\Ric)=\sum_{i, j, k=1}^n\Ric_{i\bar j}\Ric_{j\bar k}\Ric_{k\bar i}.\\
\end{array}\end{equation}
Notice that the term $a_1$ was already computed by Berezin in his
seminal paper \cite{Ber1} on quantization by deformation. Actually
Engli\v{s} expansion  is strongly related to Berezin
transform and to asymptotic expansion of Laplace integrals
(see \cite{englis2} and   \cite{loismooth}).
It is important to point out that for a general noncompact manifold
there is not a general theorem which assures the existence of Engli\v{s} expansion
(see \cite{graloi} for the case of the Kepler manifold). A partial result in this direction is
Theorem 6.1.1 due to X. Ma and G. Marinescu \cite{mama}, which translated in our situation and with our notations reads as:

\begin{theor}\label{marinescuma}
Let $(M, g, \omega =\frac{i}{2}\de\bar\de\Phi)$ be a complete \K\ manifold  and, for $\alpha>0$, $K_{\alpha\Phi}$ be the reproducing kernel of the space

\begin{equation}
\hilb_{\alpha \Phi}=\left\{ f\in\ol(M) \ | \ \, \int_M e^{-\alpha \Phi}|f|^2 \frac{\omega^n}{n!} <\infty\right\}.
\end{equation}

\noindent Then $\epsilon_{\alpha\Phi}=e^{-\alpha\Phi}K_{\alpha\Phi}$ admits an asymptotic expansion in $\alpha$ with coefficients given by (\ref{coefflu}) provided there exists $c > 0$ such that
$$ i  R^{det} > - c \Theta$$
where $R^{det}$ denotes the curvature of the connection on $\det(T^{(1,0)} M)$ induced by $g$.
\end{theor}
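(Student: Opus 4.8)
The statement is essentially a transcription of Ma--Marinescu's Theorem 6.1.1 in \cite{mama} into the present notation, so the plan is to set up the dictionary between the weighted Bergman setting and the theory of positive line bundles and then check that their hypotheses hold here. First I would let $L:=M\times\C$ be the topologically trivial holomorphic line bundle over $M$ and equip it with the Hermitian metric $h$ of weight $e^{-\Phi}$, so that the Chern curvature of $(L,h)$ is $R^L=\de\bar\de\Phi$. Since $\Phi$ is a strictly plurisubharmonic \K\ potential, $iR^L=i\de\bar\de\Phi=2\,\omega$ is a positive $(1,1)$-form, in fact a constant multiple of the \K\ form $\omega$; thus $(L,h)$ is a positive prequantum line bundle. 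For a real number $\alpha>0$ the bundle $L^{\alpha}:=M\times\C$ with metric $h^{\alpha}$ of weight $e^{-\alpha\Phi}$ is again well defined — this is where triviality of $L$ is used, as no integrality of the power is needed — and comparing with (\ref{hilbertspace}) one sees that $\hilb_{\alpha\Phi}$ is precisely the space of $L^2$ holomorphic sections of $(L^{\alpha},h^{\alpha})$ with respect to the volume form $\frac{\omega^n}{n!}$. Under this identification $K_{\alpha\Phi}$ is the Bergman kernel of $(L^{\alpha},h^{\alpha})$ and $\epsilon_{\alpha\Phi}(x)=e^{-\alpha\Phi(x)}K_{\alpha\Phi}(x,x)$ is its density function on the diagonal.

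Next I would verify the hypotheses of \cite[Theorem 6.1.1]{mama}. Completeness of $(M,g)$ is assumed; the strict uniform positivity $iR^L=2\,\Theta>0$ (here $\Theta=\omega$) was just observed; in the \K\ situation the reference form $\Theta=\omega$ is parallel and proportional to $R^L$, so the remaining bounded-geometry requirements of that theorem are automatically met; and the only substantial assumption left is a uniform lower bound for the curvature $R^{det}$ of the connection induced by $g$ on $\det(T^{(1,0)}M)$, which is exactly the hypothesis $iR^{det}>-c\,\Theta$ we have imposed. Invoking \cite[Theorem 6.1.1]{mama} — in the form valid for a continuous real parameter $\alpha\to+\infty$, which is legitimate because the estimates there are local and uniform in the power — then produces a complete asymptotic expansion $\epsilon_{\alpha\Phi}(x)\sim\sum_{j\geq 0}b_j(x)\,\alpha^{n-j}$ in the $C^r$-sense of (\ref{TalphaMEANS}), with $b_0=1$ and each $b_j$ a universal local expression in $g$, its curvature and covariant derivatives (in particular $\hilb_{\alpha\Phi}$ is nontrivial for $\alpha$ large, since the leading term $\alpha^n$ does not vanish).

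It then remains to identify the $b_j$ with the $a_j$ of (\ref{coefflu}). Being universal local invariants of the jet of the metric, the $b_j$ cannot depend on the global geometry of $M$, hence they must agree with the coefficients arising in the classical situations — bounded strongly pseudoconvex domains, treated in \cite{englis2}, and compact polarized manifolds — where the first four were computed by Engli\v{s} \cite{me2} (see also \cite{loismooth}, \cite{lutian}) to be $a_0=1$, $a_1=\frac12\rho$, $a_2$ and $a_3$ as in (\ref{coefflu}), in the normalization $\omega=\frac{i}{2}\de\bar\de\Phi$ adopted here. This yields the statement. I expect the only delicate point to be the bookkeeping: carefully matching Ma--Marinescu's conventions (powers of $2\pi$, their choice of volume form, the sign of the curvature) to ours so that indeed $b_0=1$ and $b_j=a_j$, and making the passage to a real parameter $\alpha$ fully rigorous; once the dictionary is fixed, the identification of $a_1$, $a_2$, $a_3$ is the known computation just cited.
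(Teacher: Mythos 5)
The paper offers no proof of this statement: it is quoted directly as the translation of Ma--Marinescu's Theorem 6.1.1 into the present notation, and your proposal correctly reconstructs precisely that translation --- the trivial line bundle with Hermitian weight $e^{-\alpha\Phi}$, the identification of $\hilb_{\alpha\Phi}$ with the $L^2$ holomorphic sections, the verification of positivity and of the lower bound on $R^{det}$, and the identification of the universal local coefficients with Engli\v{s}'s $a_j$ from (\ref{coefflu}). So your approach coincides with the paper's; the two caveats you flag yourself (passing from integer tensor powers to a continuous parameter $\alpha$, and matching normalizations so that $b_0=1$) are exactly the points one must check against \cite{mama}, but they do not constitute a gap.
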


Engli\v{s} expansion is the counterpart of the celebrated TYZ (Tian-Yau-Zelditch) expansion
of Kempf's distortion function
$$T_{\alpha g}(x) \sim
\sum_{j=0}^\infty  b_j(x)m^{n-j},\quad \alpha=0,1,2,\dots,$$
 for polarized compact \K\ manifolds $M$ (see  \cite{ze}
and  also \cite{arlquant}),
where  $b_j$, $j=0,1, \ldots$, are smooth coefficients with $b_0(x)=1$.
Z. Lu \cite{lu}, by means of  Tian's peak section method,
proved  that  each of the coefficients $b_j(x)$
 is a polynomial of the curvature  of the metric $g$ and its covariant
derivatives at $x$. Such a polynomial can be
found by finitely many steps of algebraic operations. Furthermore
$b_j(x)$, $j=1, 2, 3$ have the same values of those given by
(\ref{coefflu}), namely $b_j(x)=a_j(x)$, $j=1, 2, 3$.
(see also \cite{loianal} and \cite{loismooth} for the
computations of the coefficients $b_j$'s through Calabi's
diastasis function).

Due to Donaldson's work (cfr. \cite{donaldson},
\cite{do2}) in the compact case and respectively to the theory of
quantization  in the noncompact case (see, e.g. \cite{Ber1} and \cite{CGR}),
 it is  natural to study metrics with the coefficients $b_k$'s of  the  TYZ expansion
 (resp.    $a_k$'s of  Engli\v{s} expansion)  being prescribed. In particular, the vanishing of these coefficients for large enough $k$ turns out to be related to some important problems in the theory of pseudoconvex manifolds. For instance, Lu and Tian (\cite{lutian}) showed that the $b_j$'s vanish for $j > n$ provided that the Bergman kernel $K$ of the pseudoconvex domain $D$ given by the disc bundle of a polarization of $M$, has vanishing log term. Let us recall that this means that $b \equiv 0$ in the decomposition
\begin{equation}\label{fefferman}
K(x,x) = \frac{a(x)}{\psi(x)^{n+1}} + b(x) \log \psi(x) ,
\end{equation}
proved by Fefferman in his celebrated paper \cite{F} for the Bergman kernel $K$ of every bounded strictly pseudoconvex domain $\Omega = \{\psi > 0 \} \subseteq {\C}^n$ given by a defining function $\psi$ (for analogous results in the case of Szeg\"{o} kernels or weighted Bergman kernels, see respectively \cite{lutian} and \cite{englis3}). The vanishing of the log term is a condition of considerable interest both from an analytic and a geometric point of view. For example, Ramadanov (\cite{R}) conjectured that if the log term of the Bergman kernel vanishes, then the domain is biholomorphic equivalent to the unit ball (this was proved to be true for some special cases, such as domains in ${\C}^2$ or domains with rotational symmetries, see for example \cite{BoCo}, \cite{Bout}); Lu and Tian conjectured that if the log term of the Szeg\"{o} kernel of the unit circle bundle of a polarisation of $({\C} {\p}^n, \omega)$ vanishes, for some $\omega$ cohomologous to the Fubini-Study form $\omega_{FS}$, then $\omega$ is holomorphically isometric to $\omega_{FS}$. Observe that for $n=1$ (see \cite[Theorem 1.1]{lutian}) the proof of this conjecture follows by the above-mentioned result of Lu and Tian and by the expression of $a_2=b_2$  in  (\ref{coefflu}) which imply that  the  scalar curvature $\rho$  is  constant.

The following theorem is the second main result  of this paper.

\begin{theor}\label{expansion}
Let $m\geq 0$,   $g_m$ be the  LeBrun metric on $\C^2$ and $\alpha$ be a positive real number.
Then Engli\v{s} expansion of   Rawnsley's function $\epsilon_{\alpha g}$ exists and the coefficients $a_j$ are given by  (\ref{coefflu}).
Moreover,  if $a_j = 0$ for $j > 2$  then $m=0$, i.e. $g_m=g_0$ is the flat metric on $\C^2$.
\end{theor}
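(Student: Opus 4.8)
The plan is to derive both claims from the fact, established in the Appendix, that for every $m\ge 0$ the metric $g_m$ is a \emph{complete, Ricci flat} \K\ metric on $\C^2$. The existence of the expansion, together with the fact that its coefficients are the $a_j$ of (\ref{coefflu}), follows immediately from Theorem \ref{marinescuma}: since $g_m$ is Ricci flat, the curvature $R^{det}$ of the connection induced by $g_m$ on $\det(T^{(1,0)}\C^2)$ vanishes identically (indeed $iR^{det}$ coincides, up to sign, with the Ricci form of $g_m$), so the hypothesis $iR^{det}>-c\,\omega_m$ holds for every $c>0$, and $\epsilon_{\alpha g_m}=e^{-\alpha\Phi_m}K_{\alpha\Phi_m}$ admits the asymptotic expansion (\ref{Talpha}) with coefficients given by (\ref{coefflu}). (Alternatively one could try to extract the expansion directly from the explicit series of Lemma \ref{z1z2k} by a Laplace-type analysis of the integrals $I_{m,\alpha}$, but Theorem \ref{marinescuma} makes this unnecessary.)

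Next I would insert $\ro\equiv 0$ and $\Ric\equiv 0$ — hence also all their covariant derivatives — into (\ref{coefflu})--(\ref{values}). Then every summand of $a_1$, $a_2$, $a_3$ carrying a factor $\ro$, $\Ric$ or a derivative of one of them drops out: in particular $\div\div(R,\Ric)$, $\div\div(\ro\Ric)$, $\sigma_3(\Ric)$, $\Ric(R,R)$ and $R(\Ric,\Ric)$ all vanish identically, and one is left with
\begin{equation}\label{ajRF}
a_1\equiv 0,\qquad a_2=\frac{1}{24}\,|R|^2,\qquad a_3=\frac{1}{48}\,\Delta|R|^2.
\end{equation}
Thus the assumption that $a_j=0$ for $j>2$ forces, already through $a_3=0$ alone, that $\Delta|R|^2\equiv 0$; that is, $|R|^2$ is harmonic on the complete Ricci flat manifold $(\C^2,g_m)$.

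To conclude I would feed into $\Delta|R|^2\equiv 0$ the explicit expressions for $|R|^2$ and $\Delta|R|^2$ computed in the Appendix as functions of $m$ and of the point (equivalently of $U=u^2$ and $V=v^2$); evaluating, say, at the origin $z_1=z_2=0$ reduces $\Delta|R|^2=0$ to an explicit equation in $m$ which one checks has $m=0$ as its only solution. A more conceptual alternative: $|R|^2\ge 0$ would then be a bounded, nonnegative harmonic function on a complete manifold with (vanishing, hence) nonnegative Ricci curvature, so it is constant by a theorem of Yau; since for $m>0$ the metric $g_m$ is, up to dilation, the Taub-NUT metric, whose curvature tends to $0$ at infinity, that constant is $0$, $g_m$ is flat, and $m=0$. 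The one genuinely laborious ingredient is the explicit computation of $|R|^2$ and, above all, of $\Delta|R|^2$ for the LeBrun metric — precisely what the Appendix carries out, and the technical core of the proof; everything else is bookkeeping with (\ref{coefflu})--(\ref{values}) and the completeness and Ricci flatness of $g_m$.
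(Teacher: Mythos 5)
Your proof is correct and follows essentially the same route as the paper: existence of the expansion via Theorem \ref{marinescuma} using Ricci flatness (so $iR^{det}=0$), then reduction of $a_3$ to $\frac{1}{48}\Delta|R|^2$ and appeal to formula (\ref{L|R|^2}) of the Appendix, which at the origin equals $-3072\,m^3$ and so vanishes only for $m=0$. The alternative argument via Yau's theorem on bounded harmonic functions is a nice extra observation but not needed.
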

\begin{proof}
The first part of the statement follows from Theorem \ref{marinescuma} applied to $M = {\C}^2$ and $g = g_m$, since in this case  $i R^{det}$ equals the Ricci form of $g$ which vanishes by the Ricci-flatness of LeBrun's metrics.

Assume now that $a_j = 0$ vanishes for $j > 2$. In particular, $a_3 = 0$.
By (\ref{coefflu}),  (\ref{values}) and  by the fact that $g_m$  is Ricci flat it follows that   the Laplacian of the norm of its curvature tensor vanishes, i.e.  $\Delta |R|^2 =0$. On the other hand   formula (\ref{L|R|^2}) in Lemma \ref{curvature} yields $m=0$ and we are done.
\end{proof}
Combining the previous theorem with formula  (\ref{balprojind})  we get the following corollary which  should  be compared with
Theorem 4.1 in  \cite{tian}, where it is proven  the analogous result for \K --Einstein metrics with Einstein constant
$-1$.

\begin{cor}
Let $g_m$ be the  LeBrun metric on $\C^2$, $m\geq 0$.
Then $g_m$ can be approximated by  suitable normalized  projectively induced \K\ metrics on $\C^2$.
\end{cor}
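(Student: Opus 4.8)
The plan is to exhibit $g_m$ as the $C^\infty$-limit, uniformly on compact subsets of $\C^2$, of a one-parameter family of rescaled coherent states metrics. First I would check that for every $\alpha>0$ the Hilbert space $\hilb_{\alpha\Phi_m}$ is non-trivial --- indeed infinite-dimensional --- by Lemma \ref{z1z2k}, and that, since the constant functions belong to it, $\epsilon_{\alpha g_m}(x)=e^{-\alpha\Phi_m(x)}K_{\alpha\Phi_m}(x,x)>0$ at every point of $\C^2$. Hence the coherent states map $F_\alpha\colon\C^2\to\CP^\infty$, $x\mapsto[f_0^\alpha(x),f_1^\alpha(x),\dots]$, associated to an orthonormal basis $\{f_j^\alpha\}$ of $\hilb_{\alpha\Phi_m}$, is globally well defined, and formula (\ref{balprojind}) reads
\[
\frac{1}{\alpha}\,F_\alpha^*\omega_{FS}=\omega_m+\frac{1}{\alpha}\,\frac{i}{2}\,\de\bar\de\log\epsilon_{\alpha g_m}.
\]
The forms $\frac{1}{\alpha}F_\alpha^*\omega_{FS}$ are the \K\ forms of the normalized projectively induced metrics $g_\alpha:=F_\alpha^*\!\big(\tfrac1\alpha g_{FS}\big)$ on $\C^2$ (and $g_\alpha$ is a genuine \K\ metric as soon as the right-hand side is positive definite), so the corollary will follow once we show that the error term $\frac{1}{\alpha}\frac{i}{2}\de\bar\de\log\epsilon_{\alpha g_m}$ tends to $0$ as $\alpha\to+\infty$ in the $C^\infty$-topology on compact subsets of $\C^2$.

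To obtain this decay I would use Theorem \ref{expansion}: since $g_m$ is Ricci flat, $\epsilon_{\alpha g_m}$ admits the Engli\v{s} expansion (\ref{Talpha}) with $n=2$ and $a_0=1$, together with the $C^r$-on-compacta remainder estimate (\ref{TalphaMEANS}). Fixing a compact $H\subseteq\C^2$ and applying (\ref{TalphaMEANS}) with $l=2$, we may write $\epsilon_{\alpha g_m}=\alpha^2\big(1+\psi_\alpha\big)$ with $\psi_\alpha=a_1\alpha^{-1}+a_2\alpha^{-2}+E_\alpha$ and $\|\psi_\alpha\|_{C^r(H)}=O(\alpha^{-1})$ for every $r$; in particular $1+\psi_\alpha$ stays in a fixed compact subinterval of $(0,\infty)$ on $H$ for all $\alpha$ large. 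Therefore $\log\epsilon_{\alpha g_m}=2\log\alpha+\log(1+\psi_\alpha)$, so $\de\bar\de\log\epsilon_{\alpha g_m}=\de\bar\de\log(1+\psi_\alpha)$, and since the logarithm is smooth with all derivatives bounded on that subinterval, the chain rule gives $\|\de\bar\de\log\epsilon_{\alpha g_m}\|_{C^r(H)}=O(\alpha^{-1})$. Dividing by $\alpha$ we get $\big\|\tfrac1\alpha\tfrac{i}{2}\de\bar\de\log\epsilon_{\alpha g_m}\big\|_{C^r(H)}=O(\alpha^{-2})$; in particular $F_\alpha^*\omega_{FS}=\alpha\omega_m+O(\alpha^{-1})$ is positive definite for $\alpha$ large, so $F_\alpha$ is then a holomorphic immersion and $g_\alpha\to g_m$ in $C^\infty$ on compacta, as desired.

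The technical heart is the passage from the control (\ref{TalphaMEANS}) of $\epsilon_{\alpha g_m}$ itself to the analogous control of $\de\bar\de\log\epsilon_{\alpha g_m}$: one must differentiate the asymptotic expansion twice and push it through the logarithm without losing the decay. This is routine once one notes that $a_0=1$ forces the argument of the logarithm to lie in a fixed compact subset of $(0,\infty)$ over each compact $H$ for all large $\alpha$ --- which is exactly what (\ref{TalphaMEANS}) provides --- but it is the place where the full smooth Engli\v{s} expansion, and not merely the pointwise positivity of $\epsilon_{\alpha g_m}$, is genuinely used. Finally I would remark that, in contrast with Theorem \ref{expansion}, the corollary needs \emph{no} hypothesis on the vanishing of the higher coefficients $a_j$: it holds for every $m\ge 0$, the case $m=0$ being already contained in Example \ref{c0}, where $g_0$ is in fact balanced.
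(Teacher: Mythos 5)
Your proof is correct and follows exactly the paper's route: the paper's own (two-line) argument likewise takes the coherent states map $F_\alpha$ built from the orthonormal basis of Lemma \ref{z1z2k}, invokes formula (\ref{balprojind}) together with $a_0=1$ from the Engli\v{s} expansion of Theorem \ref{expansion}, and concludes $\lim_{\alpha\to+\infty}\frac{1}{\alpha}F_\alpha^*g_{FS}=g_m$. You have simply supplied the details the paper leaves implicit (the $C^r$-on-compacta control of $\de\bar\de\log\epsilon_{\alpha g_m}$ via (\ref{TalphaMEANS}) and the positivity needed for $F_\alpha$ to be well defined and an immersion), all of which are accurate.
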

\begin{proof}
Let  $F_{\alpha}:\C^2\rightarrow\CP^{\infty}$ be the coherent states map, namely the holomorphic map
constructed with the orthonormal basis of momomials given by  Lemma \ref{z1z2k}.  Then, by formula
(\ref{balprojind}), since $a_0=1$, one gets
 $\lim_{\alpha\rightarrow +\infty}\frac{1}{\alpha}F_{\alpha}^*g_{FS}=g$.
\end{proof}

\appendix
\section{Some computation}
\begin{Aprop}\label{wellknown}
Fix $m\geq 0$ and let $g_m$ be the LeBrun  metric on $\C^2$. Then $g_m$ is complete and $\omega_m\wedge\omega_m=\omega_0\wedge\omega_0$.
\end{Aprop}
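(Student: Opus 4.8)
I would prove the two assertions separately, in both cases reducing to the variables $U=u^2$, $V=v^2$ used in the proof of Lemma~\ref{z1z2k} and recycling the Jacobian computed there.

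\textbf{The volume identity.} Since $\Phi_m$ depends on $(z_1,z_2)$ only through $r_1=|z_1|^2$ and $r_2=|z_2|^2$, the complex Hessian has the form $H_{\alpha\bar\beta}=\delta_{\alpha\beta}(\Phi_m)_{r_\alpha}+(\Phi_m)_{r_\alpha r_\beta}\bar z_\alpha z_\beta$. Introducing the auxiliary functions $\mu_j:=r_j\,(\Phi_m)_{r_j}$ (up to a constant, the components of the moment map of the standard torus action), one has the elementary identities $H_{1\bar 1}=(\mu_1)_{r_1}$, $H_{2\bar 2}=(\mu_2)_{r_2}$ and $H_{1\bar 2}H_{2\bar 1}=|H_{1\bar 2}|^2=(\Phi_m)_{r_1r_2}^2\,r_1r_2=(\mu_1)_{r_2}(\mu_2)_{r_1}$, hence the purely algebraic fact
\[
\det\!\Big(\frac{\de^2\Phi_m}{\de z_\alpha\,\de\bar z_\beta}\Big)=\det\frac{\de(\mu_1,\mu_2)}{\de(r_1,r_2)} .
\]
Because $\frac{\omega_m^2}{2}=\det\!\big(\frac{\de^2\Phi_m}{\de z_\alpha\de\bar z_\beta}\big)\frac{\omega_0^2}{2}$, the claim $\omega_m\wedge\omega_m=\omega_0\wedge\omega_0$ is equivalent to this Jacobian being $\equiv 1$. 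To see this I would use the change of variables $G\colon(U,V)\mapsto(r_1,r_2)=\big(e^{2m(U-V)}U,\,e^{2m(V-U)}V\big)$ together with $\det J_G=1+2m(U+V)$ (computed in the proof of Lemma~\ref{z1z2k}); a short implicit differentiation yields the closed forms $\mu_1=U(1+2mV)$ and $\mu_2=V(1+2mU)$, so $\det\frac{\de(\mu_1,\mu_2)}{\de(U,V)}=(1+2mU)(1+2mV)-4m^2UV=1+2m(U+V)$ as well, and by the chain rule $\det\frac{\de(\mu_1,\mu_2)}{\de(r_1,r_2)}=\det\frac{\de(\mu_1,\mu_2)}{\de(U,V)}\big/\det J_G=1$.

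\textbf{Completeness.} I would invoke the standard criterion that a connected Riemannian manifold admitting a smooth \emph{proper} function $f$ with $\sup|\nabla f|<\infty$ is complete (any Cauchy sequence is bounded, hence contained in a compact sublevel set of $f$). The candidate is $f=\sqrt{1+\Phi_m}$. Properness of $\Phi_m$ as an exhaustion of $\C^2$ follows from $(\ref{Phim})$, since $\Phi_m\ge u^2+v^2=U+V$ while $U+V\to\infty$ as $|z|\to\infty$: if $U+V$ remained bounded then so would $U$, $V$ and the factors $e^{\pm2m(U-V)}$, forcing $|z|^2=r_1+r_2$ to remain bounded. For the gradient bound, since $\det H=1$ the inverse Hessian is the adjugate of $H$, and a computation in the $(U,V)$ coordinates using the closed forms of $\mu_1,\mu_2$ above gives
\begin{align*}
\sum_{\alpha,\beta}H^{\alpha\bar\beta}\,\de_\alpha\Phi_m\,\overline{\de_\beta\Phi_m}
&=\frac{(U+V)\big(1+2m(U+V)\big)^2-4mUV\big(2+3m(U+V)\big)}{1+2m(U+V)}\\
&\le (U+V)\big(1+2m(U+V)\big) ,
\end{align*}
the last step using $UV\ge0$. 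From $UV\le\big(\tfrac{U+V}{2}\big)^2$ we get $\Phi_m=U+V+m(U^2+V^2)\ge (U+V)+\tfrac m2(U+V)^2$, so the right-hand side is $\le 4\Phi_m$; as $|\nabla f|^2_{g_m}$ is a fixed positive multiple of that quantity divided by $1+\Phi_m$, it is bounded, and completeness follows.

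\textbf{Main obstacle.} The only genuinely computational point is the displayed formula for $\sum H^{\alpha\bar\beta}\de_\alpha\Phi_m\overline{\de_\beta\Phi_m}$: it needs the implicit derivatives of $(U,V)$ with respect to $(r_1,r_2)$ and an algebraic simplification in which the terms of top degree in $U,V$ cancel, leaving the clean expression above; once the Jacobian of $G$ and the formulas for $\mu_1,\mu_2$ from the first part are available, this is elementary. As a remark, for $m>0$ completeness can alternatively be quoted from the isometry with the Taub--NUT metric, but the argument sketched here is self-contained and uniform in $m\ge0$.
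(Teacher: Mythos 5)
Your argument is correct, and on both halves it takes a genuinely different route from the paper. For the volume identity, the paper computes all entries $g_{\alpha\bar\beta}$ of the metric explicitly via implicit differentiation and then checks $\det(g_m)=1$ by brute force; you instead use the identity $\det\bigl(\frac{\de^2\Phi_m}{\de z_\alpha\de\bar z_\beta}\bigr)=\det\frac{\de(\mu_1,\mu_2)}{\de(r_1,r_2)}$ for torus-invariant potentials and divide out $\det J_G=1+2m(U+V)$, so only the first-order quantities $\mu_1=U(1+2mV)$, $\mu_2=V(1+2mU)$ are needed rather than the full Hessian -- a cleaner organization of essentially the same implicit differentiation. (I checked your closed forms for $\mu_1,\mu_2$ against the paper's $\Phi_1,\Phi_2$, and your formula
\begin{equation*}
\sum_{\alpha,\beta}H^{\alpha\bar\beta}\,\de_\alpha\Phi_m\,\overline{\de_\beta\Phi_m}
=\frac{(U+V)\bigl(1+2m(U+V)\bigr)^2-4mUV\bigl(2+3m(U+V)\bigr)}{1+2m(U+V)}
\end{equation*}
numerically at several points; both are right, and the subsequent estimates $\le (U+V)(1+2m(U+V))\le 4\Phi_m$ are valid since $m,U,V\ge 0$ and $U^2+V^2\ge\frac{1}{2}(U+V)^2$.) For completeness, the paper bounds $g_m$ below by an explicit diagonal metric $\tilde g_m$, restricts to the totally geodesic real surface $S$, and shows divergence of the radial length integral; you instead exhibit the proper function $f=\sqrt{1+\Phi_m}$ with $|\nabla f|_{g_m}$ bounded and invoke the standard completeness criterion. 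Your completeness argument is shorter and avoids the totally geodesic reduction, at the cost of the one algebraic simplification you flag; the paper's lower-bound metric $\tilde g_m$ has the side benefit of making the asymptotic growth of distances visible ($d\sim\log$ of the radius for $m>0$), which your gradient bound hides. Both approaches are self-contained and uniform in $m\ge 0$; no gaps.
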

\begin{proof}
Consider the K\"ahler potential
$$\Phi=\Phi_m(U,V)=U+V+m(U^2+V^2)$$ for the LeBrun's  metric $g_m$,
where $U=u^2$ and $V=v^2$. Here $U$ and $V$ are implicitely defined by
\begin{equation}\label{x_1=}
x_1=|z_1|^2=e^{2m(U-V)}U,
\end{equation}
\begin{equation}\label{x_2=}
x_2=|z_2|^2=e^{2m(V-U)}V.
\end{equation}
(cfr. (\ref{Phim}) and (\ref{x1z1})).
Since $\Phi=\Phi(x_1,x_2)$ (i.e. depends on the squared module of the variables), we can write the matrix $g_m =(g_{\alpha\bar\beta})$ as
\begin{equation}\label{metricg}
\left(
\begin{array}{cc}
g_{1\bar 1} & g_{1\bar 2}   \\
g_{2\bar 1}  & g_{2\bar 2}
\end{array}
\right)=
\left(
\begin{array}{cc}
\Phi_{11}\,x_1+\Phi_{1}  &\Phi_{12}\,\bar z_1 z_2    \\
 \Phi_{12}\, z_1\bar z_2 & \Phi_{22}\,x_2+\Phi_2
\end{array}
\right),
\end{equation}
where we write $\Phi_i=\de \Phi/\de x_i$ and $\Phi_{ij}=\de^2 \Phi/\de x_i \de x_j$. 
Now  observe that
$$\frac{\de \Phi}{\de x_i}=\frac{\de \Phi}{\de U}\frac{\de U}{\de x_i}+\frac{\de \Phi}{\de V}\frac{\de V}{\de x_i}, \qquad i=1,2,$$
and
\begin{equation}
\left(
\begin{array}{cc}
\frac{\de U}{\de x_1}  &\frac{\de U}{\de x_2}    \\
\frac{\de V}{\de x_1} &\frac{\de V}{\de x_2}
\end{array}
\right)=\left(
\begin{array}{cc}
\frac{\de x_1}{\de U}  &\frac{\de x_1}{\de V}    \\
\frac{\de x_2}{\de U} &\frac{\de x_2}{\de V}
\end{array}
\right)^{-1}.\nonumber
\end{equation}
By
$$\frac{\de x_1}{\de U}=(1+2mU)e^{2m(U-V)},\quad \frac{\de x_1}{\de V}=-2mUe^{2m(U-V)},$$
$$\frac{\de x_2}{\de V}=(1+2mV)e^{2m(V-U)},\quad \frac{\de x_2}{\de U}=-2mve^{2m(V-U)},$$
since $1+2m(U+V)\neq 0$, we get
\begin{equation}
\left(
\begin{array}{cc}
\frac{\de x_1}{\de U}  &\frac{\de x_1}{\de V}    \\
\frac{\de x_2}{\de U} &\frac{\de x_2}{\de V}
\end{array}
\right)^{-1}=
\left(
\begin{array}{cc}
\frac{(1+2mV)e^{2m(V-U)}}{1+2m(U+V)} &\frac{2me^{2m(U-V)}U}{1+2m(U+V)}   \\
\frac{2me^{2m(V-U)}V}{1+2m(U+V)} &\frac{(1+2mU)e^{2m(U-V)}}{1+2m(U+V)}
\end{array}
\right).\nonumber
\end{equation}
Thus, since $\de\Phi/\de U=1+2m\,U$ and $\de\Phi/\de V=1+2m\, V$,
a direct computation gives
\begin{equation}
\Phi_1=\frac{\de\Phi}{\de x_1}=(1+2mV)e^{2m(V-U)},\nonumber
\end{equation}
\begin{equation}
\Phi_2=\frac{\de\Phi}{\de x_2}=(1+2mU)e^{2m(U-V)},\nonumber
\end{equation}
\begin{equation}
\Phi_{11}=\frac{\de\Phi_1}{\de U}\frac{\de U}{\de x_1}+\frac{\de\Phi_1}{\de V}\frac{\de V}{\de x_1}=-\frac{2me^{4m(V-U)}}{1+2m(U+V)},\nonumber
\end{equation}
\begin{equation}
\Phi_{12}=\frac{\de\Phi_1}{\de U}\frac{\de U}{\de x_2}+\frac{\de\Phi_1}{\de V}\frac{\de V}{\de x_2}=\frac{4m(1+m(U+V))}{1+2m(U+V)},\nonumber
\end{equation}
\begin{equation}
\Phi_{22}=\frac{\de\Phi_2}{\de U}\frac{\de U}{\de x_2}+\frac{\de\Phi_2}{\de V}\frac{\de V}{\de x_2}=-\frac{2me^{4m(U-V)}}{1+2m(U+V)}.\nonumber
\end{equation}
Combining these formulas with (\ref{x_1=}), (\ref{x_2=}) and (\ref{metricg}) above, we get
\begin{equation}\label{metricfinal}
g_m=\left(
\begin{array}{cc}
\frac{1+4mV(1+mU+mV)}{1+2m(U+V)}e^{2m(V-U)}  &\frac{4m(1+mU+mV)}{1+2m(U+V)}z_2 \bar z_1 \\
\frac{4m(1+mU+mV)}{1+2m(U+V)}z_1\bar z_2 & \frac{1+4mU(1+mU+mV)}{1+2m(U+V)}e^{2m(U-V)}
\end{array}
\right),
\end{equation}
from which follows easily, substituting again (\ref{x_1=}) and (\ref{x_2=}), that $\det (g_m)=1$, i.e.
$\omega_m\wedge \omega_m=\det (g_m)\omega_0\wedge\omega_0=\omega_0\wedge\omega_0$.

In order to prove the completeness of $g_m$, consider the metric $\tilde g_m$ on $\C^2$ defined by
$$\tilde g_m=\frac{1}{1+2m (U+V)}\left(e^{2m(V-U)}dz_1d\bar z_1+e^{2m(U-V)}dz_2d\bar z_2\right).$$
We claim that $g_m\geq \tilde g_m$. In fact, by (\ref{metricfinal}) given $(w_1,w_2)\in T_{(z_1,z_2)}\C^2$ we get
\begin{equation}
\begin{split}
||(w_1,w_2)||_{g_m}^2=&\frac{1+4mV(1+mU+mV)}{1+2m(U+V)}e^{2m(V-U)} |w_1|^2+\\
&+\frac{4m(1+mU+mV)}{1+2m(U+V)}(z_2\bar z_1w_1\bar w_2+z_1\bar z_2w_2\bar w_1)+\\
&+\frac{1+4mU(1+mU+mV)}{1+2m(U+V)}e^{2m(U-V)}|w_2|^2=\\
=&\frac{e^{2m(U-V)}}{1+2m(U+V)} |w_1|^2+\frac{e^{2m(U-V)}}{1+2m(U+V)}|w_2|^2\\
&+\frac{4m(1+mU+mV)}{1+2m(U+V)}|z_2w_1+z_1w_2|^2,
\end{split}\nonumber
\end{equation}
where we used the identity $z_2\bar z_1 w_1\bar w_2+z_1\bar z_2 w_2\bar w_1=|z_2w_1+z_1w_2|^2-|z_2|^2|w_1|^2-|z_1|^2|w_2|^2$. Our problem reduces then to show that $(\C^2,\tilde g_m)$ is complete (see e.g. \cite[Ex. 7 p. 153]{docarmo}). Actually, it is enough to show that $(S, \tilde g_m|_S)$ is complete, where $S$ is the surface of real dimension $2$ defined by $$S=\left\{(z_1,z_2)\in\C^2\,|\ \mathrm{Im}(z_1)=0,\mathrm{Im}(z_2)=0\right\}.$$ In fact, since $\tilde g_m$ depends only on the modules of the variables, fixed $\theta=(\theta_1,\theta_2)\in\R^2$, the maps $\varphi_\theta$, $\psi\!:\C^2\f\C^2$ defined by $\varphi_\theta(z_1,z_2)=(e^{i\theta_1}z_1,e^{i\theta_2}z_2)$ and $\psi(z_1,z_2)= (\bar z_1, \bar z_2)$, are isometries. Thus, $S$ is totally geodesic in $(\C^2,\tilde g_m)$ being the fixed points' locus of $\psi$, and every geodesic through the origin of $(\C^2,\tilde g_m)$ can be viewed as a geodesic of $S$, since any vector tangent at the origin of $\C^2$ is isometric through a map $\varphi_\theta$, for some $\theta$, to a vector of $T_{(0,0)}S$.

Let $\mu_1=\mathrm{Re}(z_1)$, $\mu_2=\mathrm{Re}(z_2)$. For $(z_1,z_2)\in S$ we get $z_1=\mu_1$, $z_2=\mu_2$ and by $|\mu_1|=e^{m(u^2-v^2)}u$, $|\mu_2|=e^{m(v^2-u^2)}v$, it follows that
$$d\mu_1=\pm\left[(2mu^2+1)du-2muvdv\right]e^{m(u^2-v^2)},$$
$$d\mu_2=\pm\left[(2mu^2+1)dv-2muvdu\right]e^{m(v^2-u^2)},$$
which implies
\begin{equation}
\begin{split}\nonumber
\tilde g_m|_S=&\frac{\left[(2mu^2+1)du-2muvdv\right]^2+\left[(2mv^2+1)dv-2muvdu\right]^2}{1+2m(u^2+v^2)}\\
=&\frac{(1+2mu^2)^2+4m^2u^2v^2}{1+2m(u^2+v^2)}du^2+\frac{(1+2mv^2)^2+4m^2u^2v^2}{1+2m(u^2+v^2)}dv^2+\\
&-\frac{8muv(1+mu^2+mv^2)}{1+2m(u^2+v^2)}dudv.
\end{split}
\end{equation}
We claim that
\begin{equation}\label{lastcomplete}
\tilde g_m|_S\geq \frac{du^2+dv^2}{1+2m(u^2+v^2)}.
\end{equation}
Observe that if (\ref{lastcomplete}) holds we are done, since in polar coordinates the right-hand side reads $(d\rho^2+\rho^2d\theta^2)/(1+2m\rho^2)$, and
$$\int_0^{+\infty}||\alpha'(t)||_{\tilde g_m|_S}dt\geq\int_{\rho_0}^{+\infty}\frac{d\rho}{\sqrt{1+2m\rho^2}}=+\infty,$$
for any divergent curve $\alpha\!:[0,+\infty)\f S$ (see e.g. \cite[Ex. 5 p. 153]{docarmo}). In order to verify (\ref{lastcomplete}), observe that, for any vector $(\alpha,\beta)$ tangent to $S\equiv \R^2$ at $(u,v)$, it is equivalent to
\begin{equation}
\begin{split}\label{diseq}
&\left((1+2mu^2)^2+4m^2u^2v^2-1\right)\alpha^2+\left((1+2mv^2)^2+4m^2u^2v^2-1\right)\beta^2+\\
&-8muv(1+mu^2+mv^2)\alpha\beta\geq 0.
\end{split}
\end{equation}
If $\beta=0$, it holds true. Assume $\beta\neq 0$. Setting $\gamma=\alpha/\beta$ we get a second order equation in $\gamma$ whose discriminant is $0$. The conclusion follows by noticing that the coefficient of the leading term, namely $(1+2mu^2)^2+4m^2u^2v^2-1$, is nonegative and it vanishes iff $u=0$, case for which (\ref{diseq}) reads $(1+2mv^2)^2-1\geq 0$.
\end{proof}

\begin{Alem}\label{curvature}
Fix $m\geq 0$ and let $g=g_m$ be the corresponding  LeBrun  metric  on $\C^2$. Then the squared norm of its curvature tensor and its Laplacian are given respectively by:
\begin{equation}\label{|R|^2}
|R|^2=\frac{96m^2}{(1+2m(U+V))^8},
\end{equation}
and
\begin{equation}\label{L|R|^2}
\Delta|R|^2=\frac{3072\,m^3\,(7m(U+V)-1)}{(1+2m(U+V))^{11}}.
\end{equation}
\end{Alem}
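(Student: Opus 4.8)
The plan is to obtain both formulas by a direct computation starting from the closed expression (\ref{metricfinal}) for $g_m = (g_{\alpha\bar\beta})$ established in the proof of Proposition \ref{wellknown}, keeping in mind that $\det(g_{\alpha\bar\beta}) = 1$, so that the inverse metric is simply the adjugate of (\ref{metricfinal}). Since $g_m$ is \K, its curvature tensor is
$$R_{i\bar j k\bar l} = -\frac{\de^2 g_{i\bar j}}{\de z_k\,\de\bar z_l} + g^{p\bar q}\,\frac{\de g_{i\bar q}}{\de z_k}\,\frac{\de g_{p\bar j}}{\de\bar z_l},$$
so the only inputs needed are the first and second $z$-derivatives of the entries of (\ref{metricfinal}). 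Those entries are functions of $x_1 = z_1\bar z_1$, $x_2 = z_2\bar z_2$ and of $z_i\bar z_j$; differentiating them reduces, through the chain rule and the identities $x_i = |z_i|^2$, to the partial derivatives $\de U/\de x_i$ and $\de V/\de x_i$ (and their $x$-derivatives), all of which are already written down in the proof of Proposition \ref{wellknown}. After differentiating, the relations (\ref{x_1=})--(\ref{x_2=}) are used to turn every exponential $e^{2m(U-V)}$, $e^{2m(V-U)}$ back into a rational expression in $x_1, x_2, U, V$.

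To get $|R|^2$ one contracts $R_{i\bar j k\bar l}$ with four copies of the explicit inverse metric. Here the computation is greatly shortened by Ricci-flatness: from $\de\bar\de\log\det(g_{\alpha\bar\beta}) = 0$ one has $R_{i\bar j} = g^{k\bar l}R_{i\bar j k\bar l} = 0$, and in complex dimension two this forces every component of $R$ to be an explicit multiple of one of the three components $R_{1\bar1 1\bar1}$, $R_{1\bar1 1\bar2}$, $R_{1\bar2 1\bar2}$ (up to \K\ symmetry and conjugation), so that $|R|^2$ becomes a fixed universal combination of $|R_{1\bar1 1\bar1}|^2$, $|R_{1\bar1 1\bar2}|^2$, $|R_{1\bar2 1\bar2}|^2$. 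Evaluating these three components from the formula above and substituting (\ref{x_1=})--(\ref{x_2=}), all the exponentials and all the dependence on $U$ and $V$ separately cancel, and one is left with $|R|^2 = 96m^2/(1+2m(U+V))^8$, which is (\ref{|R|^2}); note the (a priori nonobvious) fact that the scalar $|R|^2$ depends only on $s := U+V$, even though the metric (\ref{metricfinal}) does not.

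For $\Delta|R|^2$ one applies the Laplace--Beltrami operator — in the normalization used in (\ref{coefflu}), a fixed nonzero multiple of $g^{i\bar j}\,\de^2/\de z_i\de\bar z_j$ — to the radial function $|R|^2 = f(s)$ just found. Using once more the chain rule $\de/\de z_i = \bar z_i\,\de/\de x_i$ together with $\de U/\de x_i$, $\de V/\de x_i$ and the explicit inverse metric, the computation collapses to differentiating a function of the single variable $s$; substituting (\ref{x_1=})--(\ref{x_2=}) and simplifying yields (\ref{L|R|^2}). In particular $\Delta|R|^2$ is again a function of $s = U+V$ alone, and it vanishes identically precisely when $m=0$, which is exactly what is invoked in the proof of Theorem \ref{expansion}.

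The main obstacle is entirely computational bookkeeping: keeping the implicit chain rule relating $(U,V)$ and $(x_1,x_2)$ under control, and checking the long sequence of cancellations that collapse the answers to functions of $U+V$. These steps are routine but error-prone and are best verified with a computer algebra system; the three structural facts that make them manageable are $\det g_m = 1$, the reduction of the number of independent curvature components caused by Ricci-flatness, and the final dependence on $s = U+V$ only.
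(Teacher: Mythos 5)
Your proposal is correct and follows essentially the same route as the paper: a direct computation of the curvature components from the explicit closed form of $g_m$ (using $\det g_m=1$ so that the inverse metric is the adjugate, and the chain rule through $U,V$ and (\ref{x_1=})--(\ref{x_2=})), followed by contraction with the inverse metric for $|R|^2$ and application of $g^{i\bar j}\de^2/\de z_i\de\bar z_j$ to the resulting function of $U+V$ for the Laplacian. The only (harmless) difference is your use of Ricci-flatness to cut the number of independent curvature components from six to three before contracting --- the paper simply lists all components explicitly --- though note that the coefficients in the resulting expression for $|R|^2$ in terms of $R_{1\bar11\bar1}$, $R_{1\bar11\bar2}$, $R_{1\bar21\bar2}$ involve the metric entries and are not universal constants as your wording suggests.
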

\begin{proof}
Recall that the curvature tensor for a given K\"ahler metric $g=(g_{i\bar j})$ on a $n$-dimensional complex manifold is given by
\begin{equation}\label{curvature2}
R_{i\bar j k\bar l}=-\frac{\de^2 g_{i\bar l}}{\de z_k\de\bar z_j}+\sum_{p q=1}^n g^{p\bar q}\frac{\de g_{i\bar p}}{\de z_k}\frac{\de g_{q\bar l}}{\de\bar z_j},
\end{equation}
and, in accordance with the general definition of norm of a complex tensor,
 (see e.g. \cite[p. 127]{zhe}) we have:
\begin{equation}\label{normcurv}
|R|^2=\sum_{i,j,k,l,p,q,r,s=1}^n {g^{p\bar i}}{g^{j\bar q}}{g^{r\bar k}}{g^{l\bar s}}R_{i\bar j k\bar l}\overline{R_{p\bar q r\bar s}}.
\end{equation}
Moreover the Laplacian $\Delta|R|^2$ reads
\begin{equation}\label{laplaciannormcurv}
\Delta|R|^2=\sum_{i,j=1}^n {g^{i\bar j}}\frac{\de^2 |R|^2}{\de z_i\de\bar z_j}.
\end{equation}
Here $g^{i\bar j}$, $i, j=1, \dots ,n$ are the entries  of the inverse of
$(g_{i\bar j})$, namely $\sum_{j =1}^{n}g^{i\bar
j}g_{j\bar k}=\delta_{ik}$.

Observe that by (\ref{metricg}), since $\det(g)=1$, we have
\begin{equation}\label{metricginv}
\left(
\begin{array}{cc}
g^{1\bar 1} & g^{1\bar 2}   \\
g^{2\bar 1}  & g^{2\bar 2}
\end{array}
\right)=\left(
\begin{array}{cc}
\Phi_{22}\,x_2+\Phi_2 &-\Phi_{12}\,\bar z_1 z_2    \\
- \Phi_{12}\, z_1\bar z_2 &  \Phi_{11}\,x_1+\Phi_{1}
\end{array}
\right).
\end{equation}
Further, by (\ref{metricg}) we obtain
\begin{equation}\label{firstderiv}
\begin{split}
\frac{\de g_{k\bar k}}{\de z_j}=&\Phi_{kkj}\bar z_j x_k+\Phi_{kk}\bar z_j\delta_{jk}+\Phi_{kj}\bar z_j,\\
\frac{\de g_{k\bar j}}{\de z_j}=&\Phi_{kj}\bar z_k+\Phi_{kjj}\bar z_k x_j,\quad (\textrm{for}\ k\neq j),\\
\frac{\de g_{j\bar k}}{\de z_j}=&\Phi_{jkj}\bar z_j^2z_k,\quad (\textrm{for}\ k\neq j).
\end{split}
\end{equation}
where $\Phi_{jkl}=\frac{\partial^3\Phi}{\partial z_j\partial z_k\partial z_l}, j, k, l=1, 2.$
It  follows easily by the previous computations that
$$\Phi_{111}=\frac{4m^2e^{6m(V-U)}}{(1+2m(U+V))^3}(3+4m(U+2V)),$$
$$\Phi_{222}=\frac{4m^2e^{6m(U-V)}}{(1+2m(U+V))^3}(3+4m(2U+V)),$$
$$\Phi_{112}=-\frac{4m^2e^{2m(V-U)}}{(1+2m(U+V))^3}(1+4mV),$$
$$\Phi_{221}=-\frac{4m^2e^{2m(U-V)}}{(1+2m(U+V))^3}(1+4mU).$$
Finally, by (\ref{firstderiv}) we get, for  $ j, k=1, 2$,
\begin{equation*}
\begin{split}
\frac{\de^2 g_{k\bar k}}{\de z_k\de \bar z_k}=&\Phi_{kkkk}x^2_k+4\Phi_{kkk}x_k+2\Phi_{kk},\\
\frac{\de^2 g_{k\bar j}}{\de z_k\de \bar z_j}=\frac{\de^2 g_{k\bar k}}{\de z_j\de \bar z_j}=&\Phi_{kkjj}x_kx_j+\Phi_{kkj}x_k+\Phi_{kjj}x_j+\Phi_{kj},\quad (\textrm{for}\ k\neq j)\\
\frac{\de^2 g_{j\bar k}}{\de z_k\de \bar z_k}=\frac{\de^2 g_{k\bar k}}{\de z_k\de \bar z_j}=&\Phi_{kkkj}z_k^2\bar z_k\bar z_j+2\Phi_{kkj}z_k\bar z_j,\quad (\textrm{for}\ k\neq j),\\
\frac{\de^2 g_{j\bar k}}{\de z_j\de \bar z_k}=&\Phi_{kkjj}z_k^2\bar z_j^2,\quad (\textrm{for}\ k\neq j),
\end{split}
\end{equation*}
where

$\frac{\partial^4\Phi}{\partial x_1^4}=\Phi_{1111}=-\frac{16m^3e^{8m(V-U)} \left(8+43mV+19mU+12m^2\left(4UV+5V^2+U^2\right)\right)}{(1+2m(U+V))^5},$
\vskip 0.1cm
$\frac{\partial^4\Phi}{\partial x_2^4}=\Phi_{2222}=-\frac{16m^3e^{8m(U-V)}\left(8+43mU+19mV+12m^2\left(4UV+5U^2+V^2\right)\right)}{(1+2m(U+V))^5},$
\vskip 0.1cm
$\frac{\partial^4\Phi}{\partial x_1^3\partial x_2}=\Phi_{1112}=\frac{16m^3e^{4m(V-U)}\left(2+mU+13mV+24m^2V^2\right)}{(1+2m(U+V))^5},$
\vskip 0.1cm
$\frac{\partial^4\Phi}{\partial x_1\partial x_2^3}=\Phi_{2221}=\frac{16m^3e^{4m(U-V)}\left(2+mV+13mU+24m^2U^2\right)}{(1+2m(U+V))^5},$
\vskip 0.1cm
$\frac{\partial^4\Phi}{\partial x_1^2\partial x_2^2}=\Phi_{1122}=\Phi_{2211}=-\frac{8m^3 \left(1+2m(U+V)+8m^2(U-V)^2\right)}{(1+2m(U+V))^4}.$

\vskip 0.1cm
Thus, it is not hard to see using  (\ref{curvature}) that
\begin{equation}
{\small \begin{split}
R_{1\bar 1 1\bar 1}=\frac{4 me^{4m(V-U)}}{(1+2m(U+V))^5} (&16m^4V(V^3-4UV^2+U^2V)+\\
&+32m^3V^2(V-2U)+8m^2V(3V-2U)+8mV+1),
\end{split}\nonumber}
\end{equation}
\begin{equation}
{\small\begin{split}
R_{1\bar 1 1\bar 2}=R_{1\bar 2 1\bar 1}=\overline{R_{2\bar 1 1\bar 1}}=\overline{R_{1\bar 1 2\bar 1}}=&\frac{8m^2z_1\bar z_2e^{2m(V-U)}}{(1+2m(U+V))^5}(\, 8m^3V(V^2-4UV+U^2)+\\
&\qquad\quad+4m^2V(V-5U)-2m(V+2U)-1)),
\end{split} \nonumber}
\end{equation}
\begin{equation}
{\small \begin{split}
R_{1\bar 1 2\bar 2}=R_{2\bar 2 1\bar 1}=&{R_{1\bar 2 2\bar 1}}={R_{2\bar 1 1\bar 2}}=\frac{4 m}{(1+2m(u+v))^5}(16m^4uv(u^2+v^2-4uv)+\\
&\quad \quad -16m^3uv (u+v)-4m^2(u^2+v^2+4uv)-4m(u+v)-1),
\end{split} \nonumber}
\end{equation}
\begin{equation}
\small\begin{split}
R_{1\bar 2 1\bar 2}&=\overline{R_{2\bar 1 2\bar 1}}=\frac{32(2m^2(V^2+U^2-4UV)-2m(U+V)-1))\bar z_1^2z_2m^3}{(1+2m(U+V))^5},
\end{split} \nonumber
\end{equation}
\begin{equation}
{\small\begin{split}
R_{2\bar 2 2\bar 1}=R_{2\bar 1 2\bar 2}=\overline{R_{1\bar 2 2\bar 2}}=\overline{R_{2\bar 2 1\bar 2}}=&\frac{8 m^2z_2\bar z_1e^{2m(U-V)}}{(1+2m(U+V))^5}(8m^3U(U^2-4VU+V^2)+\\
&\quad \quad+4m^2U(U-5V)-2m(U+2V)-1),
\end{split} }\nonumber
\end{equation}
 \begin{equation}
{\small\begin{split}
R_{2\bar 2 2\bar 2}=\frac{4me^{4m(U-V)}}{(1+2m(U+V))^5}(&16m^4U(U^3-4VU^2+V^2U)+\\
&+32m^3U^2(U-2V)+8m^2U(3U-2V)+8mU+1).
\end{split} \nonumber}
\end{equation}
Formulas  (\ref{|R|^2})  and (\ref{L|R|^2})  can be obtained, after a long but straightforward computation,  by substituting
the previous expressions into (\ref{normcurv}) and (\ref{laplaciannormcurv}).
\end{proof}

\end{document}